\definecolor{cardinal}{rgb}{0.77, 0.12, 0.23}\usepackage[english]{babel}
\numberwithin{equation}{section}
\newtheorem{Teo}{Theorem}[section]
\theoremstyle{plain}
\newtheorem{Lem}[Teo]{Lemma}
\theoremstyle{plain}
\newtheorem{Pro}[Teo]{Proposition}
\theoremstyle{plain}
\newtheorem{Def}[Teo]{Definition}
\theoremstyle{definition}
\newtheorem{Oss}[Teo]{Remark}
\newtheorem{Ese}[Teo]{Example}
\title{Nonlinear Dirichlet problem for the nonlocal anisotropic operator $L_K$}
\author[S.\ Frassu]{Silvia Frassu}
\address[S.\ Frassu]{Department of Mathematics and Computer Science
\newline\indent
University of Cagliari
\newline\indent
Viale L. Merello 92, 09123 Cagliari, Italy}
\email{silvia.frassu@unica.it}
\subjclass[2010]{35R09, 35R11, 47G20.}
\keywords{Integrodifferential operators, Variational methods, Fractional Laplacian.}
\begin{document}

\begin{abstract}
In this paper we study an equation driven by a nonlocal anisotropic operator with homogeneous Dirichlet boundary conditions. We find at least three non trivial solutions: one positive, one negative and one of unknown sign, using variational methods and Morse theory.
\noindent We present some results about regularity of solutions such as $L^{\infty}$-bound and Hopf's lemma, for the latter we first consider a non negative nonlinearity and then a strictly negative one.
Moreover, we prove that, for the corresponding functional, local minimizers with respect to a $C^0$-topology weighted with a suitable power of the distance from the boundary are actually local minimizers in the $X(\Omega)$-topology.
\end{abstract}

\maketitle

\section{Introduction}\label{sec1}

\noindent
In the last few years, nonlocal operators have taken increasing relevance, because they arise in a number of applications, in such fields as game theory, finance, image processing, and optimization, see \cite{A, BV, LC, RO} and the references therein. \\  
The main reason is that nonlocal operators are the infinitesimal generators of L\'{e}vy-type stochastic processes. A L\'{e}vy process is a stochastic process with independent and stationary increments, it represents the random motion of a particle whose successive displacements are independent and statistically identical over different time intervals of the same length.
These processes extend the concept of Brownian motion, where the infinitesimal generator is the Laplace operator.\\
\noindent The linear operator $L_K$ is defined for any sufficiently smooth function 
$u:\mathbb{R}^n \rightarrow \mathbb{R}$ and all $x \in \mathbb{R}^n$ by
$$\mathit{L}_K u(x)= P.V. \int_{\mathbb{R}^n} (u(x)-u(y))K(x-y)\,dy,$$  where 
$$K(y)=a\Bigl(\cfrac{y}{|y|}\Bigr)\cfrac{1}{|y|^{n+2s}}$$
is a singular kernel for a suitable function $a$. 
\noindent The infinitesimal generator $L_K$ of any L\'{e}vy processes is defined in this way, under the hypothesis that the process is symmetric, and the measure $a$ is absolutely continuous on $S^{n-1}$.
In the particular case $a\equiv 1$ we obtain the fractional Laplacian operator $(-\Delta)^s$. \\
Among all the nonlocal operators we choose the anisotropic type, because we want to consider 
L\'{e}vy processes that are as general as possible.\\
\noindent In order to explain our choice, we observe that the \emph{nonlocal evolutive equation} 
\[
u_t(x,t)+\mathit{L_K}u(x,t)=0
\]
naturally arises from a probabilistic process in which a particle moves randomly in the space subject to a probability that allows long jumps with a polynomial tail \cite{BV}. In this case, at each step the particle selects randomly a direction $v \in S^{n-1}$ with the probability density $a$, differently from the case of the fractional heat equation \cite{BV}.
Another probabilistic motivation for the operator $L_K$ arises from a \emph{pay-off} approach \cite{BV}-\cite{RO}.\\
\noindent In this paper we study the nonlinear Dirichlet problem
\[
\begin{cases}
L_K u = f(x,u)  & \text{in $\Omega$ } \\
u = 0 & \text{in $\mathbb{R}^n \setminus \Omega$,}
\end{cases}
\] 
where $\Omega\subset\mathbb{R}^n$ is a bounded domain with a $C^{1,1}$ boundary, $n>2s$, $s\in(0,1)$, and $f:\Omega\times\mathbb{R}\rightarrow\mathbb{R}$ is a Carath\'{e}odory function.\\
The choice of the functional setting $X(\Omega)$, which will be defined later on, is extremely delicate and it is crucial to show our results. 
By the results of Ros Oton in \cite{RO}, if $a$ is nonnegative the Poincar\'{e} inequality and regularity results still hold, therefore they are used to solve linear problems; on the other hand, by results of Servadei and Valdinoci in \cite{SV1}, if $a$ is positive $X(\Omega)$ is continuously embedded in $L^q(\Omega)$ for all $q\in[1, 2^*_s]$ and compactly for all $q\in[1, 2^*_s)$, and these tools are necessary to solve nonlinear problems. 
Here the fractional critical exponent is $2^*_s=\frac{2n}{n-2s}$ for $n>2s$. In analogy with the classical cases, if $n<2s$ then $X(\Omega)$ is embedded in $C^{\alpha}(\overline{\Omega})$ with $\alpha=\frac{2s-n}{2}$ \cite[Theorem 8.2]{DNPV}, while in the limit case $n=2s$ it is embedded in $L^q(\Omega)$ for all $q \geq 1$. 
Therefore, due to Corollary 4.53 and Theorem 4.54 in \cite{DDE}, we can state that the results of this paper hold true even when $n\leq 2s$, but we only focus on the case $n>2s$, with subcritical or critical nonlinearities, to avoid trivialities (for instance, the $L^\infty$ bounds are obvious for $n<2s$). 
Note that $n\leq2s$ requires $n=1$, hence this case falls into the framework of ordinary nonlocal equations. 
In the limit case $n=1$, $s=\frac{1}{2}$ the critical growth for the nonlinearity is of exponential type, according to the fractional Trudinger-Moser inequality. Such case is open for general nonlocal operators, though some results are known for the operator $(-\Delta)^{\frac{1}{2}}$, see \cite{IS}.\\ 
An alternative to preserve regularity results is taking kernels between two positive constants, for instance considering $a \in L^{\infty}(\Omega)$, but in this way the operator $L_K$ behaves exactly as the fractional Laplacian and, in particular $X(\Omega)$ coincides with the Sobolev space $H^s_0(\Omega)$, consequently there is not any real novelty. 
These reasons explain our assumptions on the kernel $K$.\\
\noindent A typical feature of this operator is the \emph{nonlocality}, in the sense that the value of $L_K u(x)$ at any point $x \in \Omega$ depends not only on the values of $u$ on a neighborhood of $x$, but actually on the whole $\mathbb{R}^n$, since $u(x)$ represents the expected  value of a random variable tied to a process randomly jumping arbitrarily far from the point $x$.
This operator is said \emph{anisotropic}, because the role of the function $a$ in the kernel is to weight differently the different spacial directions.\\
Servadei and Valdinoci have established variational methods for nonlocal operators and they have proved an existence result for equations driven by integrodifferential operator $L_K$, with a general kernel $K$, satisfying \textquotedblleft structural properties\textquotedblright, as we will see later \eqref{P2}-\eqref{P3}-\eqref{P4}.
They have shown that problem \eqref{P} admits a Mountain Pass type solution, not identically zero, under the assumptions that the nonlinearity $f$ satisfies a subcritical growth, the Ambrosetti-Rabinowitz's condition and  $f$ is superlinear at $0$, see \cite{SV1}-\cite{SV2}.\\
Ros Oton and Valdinoci have studied the linear Dirichlet problem, proving existence of solutions, maximum principles and constructing some useful barriers, moreover they focus on the regularity properties of solutions, under weaker hypothesis on the function $a$ in the kernel $K$, see \cite{RO}-\cite{ROV}.\\
\noindent In \cite{IMS} Iannizzotto, Mosconi, Squassina have studied the problem \eqref{P} with the fractional Laplacian and they have proved that, for the corresponding functional $J$, being a local minimizer for $J$ with respect to a suitable weighted $C^0$-norm, is equivalent to being an $H_0^s(\Omega)$-local minimizer. Such result represents an extension to the fractional setting of the classic result by Brezis and Nirenberg for Laplacian operator \cite{BN}.\\
We hope to make a contribution to the knowledge of nonlocal anisotropic operators, using the existing tools already in literature to prove new results, as $L^{\infty}$-bounds and the principle of equivalence of minimizers. We have extended this minimizers principle to the case of anisotropic operator $L_K$, considering a suitable functional analytical setting instead of $H_0^s$.
This last fact has allowed us to prove a multiplicity result, under suitable assumptions we show that problem \eqref{P} admits at least three non trivial solution: one positive, one negative ad one of unknown sign, using variational methods and, in particular Morse theory.\\
The paper has the following structure: in Section 2 we compare different definitions of the operator $L_K$,  in Section 3 we recall the variational formulation of our problem, together with some results from critical point theory. In Section 4 we prove a $L^{\infty}$ bound on the weak solutions and the equivalence of minimizers in the two topologies $C_{\delta}^0(\overline{\Omega})$ and $X(\Omega)$, respectively. Moreover we deal with an eigenvalue problem driven by the nonlocal anisotropic operator $L_K$ and we discuss some properties of its eigenvalues and eigenfunctions. In Section 4 we prove a multiplicity result and in the Appendix we study a general Hopf's lemma where the nonlinearity is slightly negative.

\section{The nonlocal anisotropic operator $L_K$}\label{sec2}

\noindent
\begin{Def} \label{D1}
	The linear operator $L_K$ is defined for any $u$ in the Schwartz space $\mathit{S}(\mathbb{R}^n)$ as
	\begin{equation}
		\begin{split}
			\mathit{L}_K u(x) & = P.V. \int_{\mathbb{R}^n} (u(x)-u(y))K(x-y)\,dy \\
			& =\lim_{\epsilon \rightarrow 0^+} \int_{\mathbb{R}^n \setminus B_{\epsilon}(x)} (u(x)-u(y))K(x-y)\,dy,
		\end{split}
		\label{E1}
	\end{equation}
	where the singular kernel $K: \mathbb{R}^n \setminus \{0\} \rightarrow (0, +\infty)$ is given by
	$$K(y)=a\Bigl(\cfrac{y}{|y|}\Bigr)\cfrac{1}{|y|^{n+2s}}, \qquad a \in L^1(S^{n-1}), \inf_{S^{n-1}} a>0, \text{even}.$$ 
	Here P.V. is a commonly used abbreviation for \textquotedblleft in the principal value sense" (as defined by the latter equation).
\end{Def}
\noindent In general, the $u$'s we will be dealing with, do not belong in $\mathit{S}(\mathbb{R}^n)$, as the optimal regularity for solutions of nonlocal problems is only $C^s(\mathbb{R}^n)$. We will give a weaker definition of $L_K$ in Subsection 3.1.\\
We notice that the kernel of the operator $L_K$ satisfies some important properties for the following results, namely 
\begin{align}
	& m K \in L^1(\mathbb{R}^n), \text{ where } m(y)=\min\{|y|^2,1\}; \label{P2} \\
	& \text{there exists } \beta>0 \text{ such that } K(y)\geq \beta |y|^{-(n+2s)} \text{ for any } y \in \mathbb{R}^n \setminus \{0\}; \label{P3} \\ 
	&K(y)=K(-y) \text{ for any } y \in \mathbb{R}^n \setminus \{0\}. \label{P4}
\end{align}

\noindent The typical example is $K(y)= |y|^{-(n+2s)}$, which corresponds to $L_K=(-\Delta)^s$, the fractional Laplacian.\\
We remark that we do not assume any regularity on the kernel $K(y)$. As we will see, there is an interesting relation between the regularity properties of solutions and the regularity of kernel $K(y)$.\\
We recall some special properties of the case $a \in L^{\infty}(S^{n-1})$.
\begin{Oss} \label{Oss1}
	Due to the singularity at $0$ of the kernel, the right-hand side of \eqref{E1} is not well defined in general. In the case $s \in (0,\frac{1}{2})$ the integral in \eqref{E1} is not really singular near $x$. Indeed, for any $u \in \mathit{S}(\mathbb{R}^n)$, $a \in L^{\infty}(S^{n-1})$ we have
	\begin{align*}
		& \int_{\mathbb{R}^n} \cfrac{|u(x)-u(y)|}{|x-y|^{n+2s}} \; a\Bigl(\cfrac{x-y}{|x-y|}\Bigr)\,dy \\
		&\leq C ||a||_{L^\infty} \int_{B_R} \cfrac{|x-y|}{|x-y|^{n+2s}}\,dy 
		+ C ||a||_{L^\infty} ||u||_{L^\infty} \int_{\mathbb{R}^n \setminus B_R} \cfrac{1}{|x-y|^{n+2s}}\,dy\\
		&=C \left(\int_{B_R} \cfrac{1}{|x-y|^{n+2s-1}}\,dy +\int_{\mathbb{R}^n \setminus B_R} \cfrac{1}{|x-y|^{n+2s}}\,dy \right)< \infty,
	\end{align*}
	where $C$ is a positive constant depending only on the dimension and on the $L^{\infty}$ norms of $u$ and $a$, see \cite[Remark 3.1]{DNPV} in the case of the fractional Laplacian.
\end{Oss}	
\noindent The singular integral given in Definition \ref{D1} can be written as a weighted second-order
differential quotient as follows (see \cite[Lemma 3.2] {DNPV} for the fractional Laplacian):
\begin{Lem}
	For all $u \in \mathit{S}(\mathbb{R}^n)$ $L_K$ can be defined as
	\begin{equation}
		\mathit{L}_K u(x) = \frac{1}{2} \int_{\mathbb{R}^n}(2u(x)-u(x+z)-u(x-z)) K(z)\,dz, \quad x \in \mathbb{R}^n.
		\label{E2}
	\end{equation}
\end{Lem}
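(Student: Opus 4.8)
The plan is to start from the principal-value definition \eqref{E1}, change variables $z=y-x$, use the evenness of $K$ (property \eqref{P4}) to symmetrize the integrand, and then invoke property \eqref{P2} together with a second-order Taylor estimate to drop the principal value by dominated convergence. This is the same scheme as \cite[Lemma 3.2]{DNPV} for the fractional Laplacian; the only point requiring care is that $a$ is merely $L^1(S^{n-1})$.

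First, for each fixed $\epsilon>0$ I would rewrite
\[
\int_{\mathbb{R}^n \setminus B_{\epsilon}(x)} (u(x)-u(y))K(x-y)\,dy
=\int_{\mathbb{R}^n \setminus B_{\epsilon}} (u(x)-u(x+z))K(z)\,dz ,
\]
using $z=y-x$ and $K(-z)=K(z)$. This integral is absolutely convergent: on the annulus $\{\epsilon\le|z|\le 1\}$ one has $K(z)\le \epsilon^{-(n+2s)}a(z/|z|)$ with $a\in L^1(S^{n-1})$, and $|u(x)-u(x+z)|$ is bounded there; on $\{|z|>1\}$ one has $|u(x)-u(x+z)|\le 2\|u\|_{L^\infty}$ while $K\in L^1(\mathbb{R}^n\setminus B_1)$ by \eqref{P2}. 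Since $\mathbb{R}^n \setminus B_{\epsilon}$ is symmetric about the origin and $K$ is even, the reflection $z\mapsto -z$ yields the same value with $u(x+z)$ replaced by $u(x-z)$, and averaging the two identical expressions gives
\[
\int_{\mathbb{R}^n \setminus B_{\epsilon}(x)} (u(x)-u(y))K(x-y)\,dy
=\frac{1}{2}\int_{\mathbb{R}^n \setminus B_{\epsilon}} (2u(x)-u(x+z)-u(x-z))K(z)\,dz .
\]

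Next I would produce a single $L^1(\mathbb{R}^n)$ majorant for the integrand on the right, uniform in $\epsilon$. For $|z|\le 1$, applying Taylor's formula with second-order remainder to $t\mapsto u(x+tz)$ gives $|2u(x)-u(x+z)-u(x-z)|\le \|D^2 u\|_{L^\infty}\,|z|^2$, so the integrand is bounded by $\|D^2u\|_{L^\infty}\,|z|^2K(z)=\|D^2u\|_{L^\infty}\,m(z)K(z)$; for $|z|>1$ it is bounded by $4\|u\|_{L^\infty}K(z)=4\|u\|_{L^\infty}m(z)K(z)$. Hence it is dominated by $C\,m(z)K(z)$ with $C=\max\{\|D^2u\|_{L^\infty},4\|u\|_{L^\infty}\}$, which belongs to $L^1(\mathbb{R}^n)$ precisely by \eqref{P2}. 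Since $\mathbf{1}_{\mathbb{R}^n\setminus B_{\epsilon}}\to\mathbf{1}_{\mathbb{R}^n}$ pointwise as $\epsilon\to 0^+$, dominated convergence lets me pass to the limit and obtain \eqref{E2}; in fact the principal value is superfluous, the integrand being absolutely integrable over all of $\mathbb{R}^n$.

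The main obstacle — really the only one — is that $a\in L^1(S^{n-1})$ provides no pointwise bound $K(z)\le C|z|^{-(n+2s)}$, so every integrability assertion (the absolute convergence of each truncated integral before splitting and reflecting, and the existence of the uniform majorant) must be channelled through property \eqref{P2} rather than through a crude estimate on $K$. Once that is observed, the rest is the routine computation above.
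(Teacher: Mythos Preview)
Your proposal is correct and follows essentially the same route as the paper: change variables $z=y-x$, exploit the evenness \eqref{P4} to symmetrize, and observe that the second-order difference integrand is absolutely integrable so that the principal value can be dropped. The paper carries out the identical computation in the reverse direction (starting from \eqref{E2} and recovering \eqref{E1}); your version is in fact slightly more careful, since you justify the absolute integrability and the dominated-convergence step through property \eqref{P2} alone, whereas the paper's accompanying remark invokes the stronger hypothesis $a\in L^{\infty}(S^{n-1})$ for that point.
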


\begin{Oss}
	We notice that the expression in \eqref{E2} doesn't require the P.V. formulation since, for instance, taking $u \in L^\infty(\mathbb{R}^n)$ and locally  $C^2$, $a \in L^{\infty}(S^{n-1})$, using a Taylor expansion of $u$ in $B_1$, we obtain
	\begin{align*}
		& \int_{\mathbb{R}^n}\cfrac{|2u(x)-u(x+z)-u(x-z)|} {|z|^{n+2s}} \; a\Bigl(\cfrac{z}{|z|}\Bigr)\,dz \\
		& \leq c ||a||_{L^\infty} ||u||_{L^\infty}\int_{\mathbb{R}^n \setminus B_1} \cfrac{1}{|z|^{n+2s}}\,dz + ||a||_{L^\infty} ||D^2u||_{L^\infty(B_1)} \int_{B_1} \cfrac{1}{|z|^{n+2s-2}}\,dz < \infty.
	\end{align*}
\end{Oss}	

\noindent We show that the two definitions are equivalent, hence we have
\begin{align*}
	\mathit{L}_K u(x) & = \frac{1}{2} \int_{\mathbb{R}^n}(2u(x)-u(x+z)-u(x-z)) K(z)\,dz \\
	&=\frac{1}{2} \lim_{\epsilon \rightarrow 0^+} \int_{\mathbb{R}^n \setminus B_{\epsilon}} (2u(x)-u(x+z)-u(x-z)) K(z)\,dz,\\
	&=\frac{1}{2} \lim_{\epsilon \rightarrow 0^+} \left[\int_{\mathbb{R}^n \setminus B_{\epsilon}}(u(x)-u(x+z)) K(z)\,dz + \int_{\mathbb{R}^n\setminus B_{\epsilon}}(u(x)-u(x-z)) K(z)\,dz\right],
\end{align*}
we make a change of variables $\tilde{z}=-z$ in the second integral and we set $\tilde{z}=z$
$$=\lim_{\epsilon \rightarrow 0^+}\int_{\mathbb{R}^n}(u(x)-u(x+z)) K(z)\,dz,$$ 
we make another change of variables $z=y-x$ and we obtain the first definition
$$=\lim_{\epsilon \rightarrow 0^+}\int_{\mathbb{R}^n}(u(x)-u(y)) K(x-y)\,dy.$$
It is important stressing that this holds only if the kernel is even, more precisely if the function $a$ is even.\\
\noindent There exists a third definition of $L_K$ that uses a Fourier transform, we can define it as
$$\mathit{L_K}u(x)= \mathcal{F}^{-1}(S(\xi)(\mathcal{F}u))$$
where $\mathcal{F}$ is a Fourier transform and $S: \mathbb{R}^n \rightarrow \mathbb{R}$ is a multiplier, $S(\xi)=\int_{\mathbb{R}^n} (1-\cos(\xi \cdot z)) a\bigl(\frac{z}{|z|}\bigr)\,dz$.
We consider \eqref{E2} and we apply the Fourier transform to obtain 
\begin{align*}
	\mathcal{F}(\mathit{L_K}u) & =\mathcal{F}\left(\frac{1}{2} \int_{\mathbb{R}^n}(2u(x)-u(x+z)-u(x-z)) a\Bigl(\frac{z}{|z|}\Bigr)\,dz\right)\\
	& =\frac{1}{2} \int_{\mathbb{R}^n} (\mathcal{F}(2u(x)-u(x+z)-u(x-z)) a\Bigl(\frac{z}{|z|}\Bigr)\,dz \\
	& =\frac{1}{2} \int_{\mathbb{R}^n} (2-e^{i \xi \cdot z} -e^{-i \xi \cdot z}) (\mathcal{F}u)(\xi) a\Bigl(\frac{z}{|z|}\Bigr)\,dz\\
	&=\frac{1}{2} (\mathcal{F}u)(\xi) \int_{\mathbb{R}^n} (2-e^{i \xi \cdot z} -e^{-i \xi \cdot z})  a\Bigl(\frac{z}{|z|}\Bigr)\,dz \\
	& = (\mathcal{F}u)(\xi) \int_{\mathbb{R}^n} (1-\cos(\xi \cdot z) a\Bigl(\frac{z}{|z|}\Bigr)\,dz.
\end{align*}
We recall that in the case $a\equiv 1$, namely for the fractional Laplacian (see \cite[Proposition 3.3] {DNPV}), $S(\xi)=|\xi|^{2s}$.\\
If $a$ is unbounded from above, $L_K$ is better dealt with by a convenient functional approach.

\section{Preliminaries}\label{sec3}

\noindent
In this preliminary section, we collect some basic results that will be used in the forthcoming sections. In the following, for any Banach space $(X,||.||)$ and any functional $J \in C^1(X)$  we will denote by $K_J$
the set of all critical points of $J$, i.e., those points $u \in X$ such that $J'(u)=0$ in $X^*$ (dual space of $X$), while for all $c \in \mathbb{R}$ we set 
$$K_{J}^c=\{u \in K_J: J(u)=c\},$$
$$J^c=\{u \in X: J(u) \leq c\} \quad (c \in \mathbb{R}),$$
beside we set
$$\overline{B}_{\rho}(u_0)=\{u \in X: ||u-u_0||\leq \rho\} \quad (u_0 \in X, \rho >0).$$
Moreover, in the proofs of our results, $C$ will denote a positive constant (whose value may change case by case).\\
Most results require the following \emph{Cerami compactness condition} (a weaker version of the Palais-Smale condition):\\
\emph{Any sequence $(u_n)$ in $X$, such that $(J(u_n))$ is bounded in $\mathbb{R}$ and
	$(1+||u_n||)J'(u_n)\rightarrow 0$ in $X^{*}$  admits a strongly convergent subsequence}.

\subsection{Variational formulation of the problem}\label{subsec31}
\noindent
Let $\Omega$ be a bounded domain in $\mathbb{R}^n$ with $C^{1,1}$ boundary $\partial \Omega$, $n>2s$ and $s\in (0,1)$. We consider the following Dirichlet problem
\begin{equation}
	\begin{cases}
		\mathit{L}_K u = f(x,u)  & \text{in $\Omega$ } \\
		u = 0 & \text{in $\mathbb{R}^n \setminus \Omega$.}
	\end{cases}
	\label{P}
\end{equation}
We remark that the Dirichlet datum is given in $\mathbb{R}^n \setminus \Omega$ and not simply on $\partial \Omega$, consistently with the non-local character of the operator $\mathit{L_K}$.\\
The nonlinearity $f: \Omega \times \mathbb{R} \rightarrow \mathbb{R}$ is a Carath\'{e}odory function which satisfies the growth condition
\begin{equation}
	|f(x,t)|\leq C(1 + |t|^{q-1}) \text{ a.e. in } \Omega, \forall t \in \mathbb{R} \; (C>0, q \in [1, 2_{s}^{*}])
	\label{G}
\end{equation}
(here $2_{s}^{*}:= 2n/(n-2s)$ is the fractional critical exponent).\,Condition \eqref{G} is referred to as a subrictical or critical growth if $q<2_{s}^{*}$ or $q=2_{s}^{*}$, respectively.\\ 
\noindent The aim of this paper is to study nonlocal problems driven by $L_K$ and with Dirichlet boundary data via variational methods. For this purpose, we need to work in a suitable fractional Sobolev space: for this, we consider a functional analytical setting that is inspired by the fractional Sobolev spaces $H_0^s(\Omega)$ \cite{DNPV}
in order to correctly encode the Dirichlet boundary datum in the variational formulation.\\
We introduce the space \cite{SV1}
$$X(\Omega)=\{u \in L^2(\mathbb{R}^n): [u]_{K} < \infty, u=0 \text{ a.e. in } \mathbb{R}^n \setminus \Omega \},$$
with 
$$[u]_{K}^2 := \int_{\mathbb{R}^{2n}} |u(x)-u(y)|^2 K(x-y)\,dxdy.$$
$X(\Omega)$ is a Hilbert space with inner product
$$\left\langle u,v \right\rangle_{X(\Omega)} = \int_{\mathbb{R}^{2n}} (u(x)-u(y))(v(x)-v(y))K(x-y)\,dxdy,$$
which induces a norm
$$||u||_{X(\Omega)}= \left(\int_{\mathbb{R}^{2n}} |u(x)-u(y)|^2 K(x-y)\,dxdy \right)^\frac{1}{2}.$$
\noindent (We indicate for simplicity $||u||_{X(\Omega)}$ only with $||u||$, when we will consider a norm in different spaces, we will specify it.) 

\noindent By the fractional Sobolev inequality and the continuous embedding of $X(\Omega)$ in $H^s_0(\Omega)$ (see \cite[Subsection 2.2] {SV1}), we have that the embedding $X(\Omega)\hookrightarrow L^q(\Omega)$ is continuous for all $q \in [1,2_s^*]$ and compact if $q \in [1,2_s^*)$ (see \cite[Theorem 6.7, Corollary 7.2]{DNPV}).

\noindent We set for all $u \in X(\Omega)$
$$J(u)=\frac{1}{2} \int_{\mathbb{R}^{2n}} |u(x)-u(y)|^2 K(x-y)\,dxdy - \int_{\Omega} F(x,u(x))\,dx,$$
where the function $F$ is the primitive of $f$ with respect to the second variable, that is $$F(x,t)=\int_0^t f(x,\tau)\, d\tau, \quad x\in \Omega, t \in \mathbb{R}.$$
Then, $J \in C^1(X(\Omega))$ and all its critical points are weak solutions of \eqref{P}, namely they satisfy
\begin{equation}
	\int_{\mathbb{R}^{2n}} (u(x)-u(y))(v(x)-v(y))K(x-y)\,dxdy=\int_{\Omega} f(x,u(x))v(x)\,dx, \quad \forall v \in X(\Omega).
	\label{Fd}
\end{equation}

\subsection{Critical groups}\label{subsec32}
\noindent 
We recall the definition and some basic properties of critical groups, referring the reader to the monograph \cite{MMP} for a detailed account on the subject.
Let $X$ be a Banach space, $J \in C^1(X)$ be a functional, and let $u \in X$ be an isolated critical point of $J$, i.e., there exists a neighbourhood $U$ of $u$ such that  $K_J \cap U = \{u\}$, and $J(u)=c$. For all $k \in \mathbb{N}_0$, the \emph{k-th critical group of $J$ at $u$} is defined as
$$C_k(J,u)=H_k(J^c \cap U, J^c \cap U \setminus \{u\}),$$
where $H_k(\cdot , \cdot)$ is the k-th (singular) homology group of a topological pair with coefficients in $\mathbb{R}$.\\
\noindent The definition above is well posed, since homology groups are invariant under excision, hence $C_k(J,u)$ does not depend on $U$. Moreover, critical groups are invariant under homotopies preserving isolatedness of critical points.
We recall some special cases in which the computation of critical groups is immediate ($\delta_{k,h}$ is the Kronecker symbol).

\begin{Pro}{\rm\cite[Example 6.45]{MMP}} \label{M}
	Let $X$ be a Banach space, $J \in C^1(X)$ a functional and  $u \in K_J$ an isolated critical point of $J$. The following hold:
	\begin{itemize}
		\item 
		if $u$ is a local minimizer of $J$, then $C_k(J,u)=\delta_{k,0} \mathbb{R}$ for all $k \in \mathbb{N}_0$,
		\item
		if $u$ is a local maximizer of $J$, then 
		$C_k(J,u)=
		\begin{cases}
		0                       & \text{if $\mathrm{dim}(X)=\infty$} \\
		\delta_{k,m} \mathbb{R} & \text{if $\mathrm{dim}(X)=m$}
		\end{cases}$ for all $k \in \mathbb{N}_0$.
	\end{itemize}
\end{Pro}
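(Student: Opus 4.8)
The plan is to read off both critical groups straight from the definition $C_k(J,u)=H_k(J^c\cap U,\,J^c\cap U\setminus\{u\})$, using that a one‑sided bound on $J$ near $u$ lets one choose a particularly simple isolating neighbourhood. Recall that shrinking $U$ is harmless, since critical groups are invariant under excision.

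\emph{Local minimizer.} First I would fix a neighbourhood $U$ of $u$ that simultaneously isolates $u$ as a critical point, i.e.\ $K_J\cap U=\{u\}$, and on which $J(v)\ge J(u)=c$. I then claim $J^c\cap U=\{u\}$: if $v\in U$ with $J(v)\le c$, then $J(v)=c=\min_U J$, so $v$ is a local minimizer of $J$, hence $J'(v)=0$ and $v\in K_J\cap U=\{u\}$. Therefore $\bigl(J^c\cap U,\,J^c\cap U\setminus\{u\}\bigr)=(\{u\},\emptyset)$ and $C_k(J,u)=H_k(\{u\})=\delta_{k,0}\mathbb{R}$ for all $k\in\mathbb{N}_0$.

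\emph{Local maximizer.} Here I would take $U$ to be an open ball centred at $u$ (hence convex, hence contractible) small enough that $J(v)\le c$ for all $v\in U$, so that $J^c\cap U=U$ and $C_k(J,u)=H_k(U,\,U\setminus\{u\})$. Since $U$ is contractible, the long exact sequence of the pair gives $C_k(J,u)\cong\widetilde H_{k-1}(U\setminus\{u\})$ for every $k$. Radial deformation retracts $U\setminus\{u\}$ onto a sphere $S$ of $X$ centred at $u$. If $\dim X=m<\infty$ then $S\cong S^{m-1}$ and $\widetilde H_{k-1}(S^{m-1})=\delta_{k,m}\mathbb{R}$, which is the asserted formula; if $\dim X=\infty$ then $S$ is the unit sphere of an infinite‑dimensional normed space, which is contractible (a classical fact; as $X=X(\Omega)$ is a Hilbert space one may exhibit an explicit deformation, e.g.\ a Klee‑type homotopy shifting into an extra orthogonal direction), so $\widetilde H_{k-1}(S)=0$ and $C_k(J,u)=0$ for all $k$.

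The only ingredient that is not entirely elementary is the contractibility of the infinite‑dimensional sphere; the rest is the long exact sequence of a pair combined with the one‑sided control of $J$ near the extremum. I would also note that in the maximizer case the isolatedness of $u$ is used only to make $C_k(J,u)$ well defined — it plays no role in the computation itself.
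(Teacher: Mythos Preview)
Your argument is correct. The paper does not actually prove this proposition: it is quoted verbatim from \cite[Example~6.45]{MMP} and used as a black box in Section~\ref{sec5}, so there is no ``paper's own proof'' to compare against. What you have written is precisely the standard computation one finds in the literature (and essentially what \cite{MMP} does): for a local minimizer, shrink $U$ until $J\ge c$ on it and observe that any other point of $J^c\cap U$ would be a further critical point, forcing $J^c\cap U=\{u\}$; for a local maximizer, take $U$ a ball with $J\le c$, reduce to $H_k(U,U\setminus\{u\})\cong\widetilde H_{k-1}(U\setminus\{u\})$ via the long exact sequence, and retract onto a sphere.

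Two small remarks. First, in the minimizer case your implication ``$J(v)=\min_U J\Rightarrow J'(v)=0$'' uses that $v$ lies in the \emph{interior} of $U$; since you already invoked excision to shrink $U$, taking $U$ open is legitimate, but it is worth saying so explicitly. Second, your parenthetical ``as $X=X(\Omega)$ is a Hilbert space'' is out of place here: the proposition is stated for a general Banach space, and the contractibility of the unit sphere holds in any infinite-dimensional normed space (Klee/Bessaga), not only in Hilbert spaces. Neither point affects the validity of the proof.
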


\noindent Next we pass to critical points of mountain pass type.

\begin{Def}{\rm\cite[Definition 6.98]{MMP}}
	Let $X$ be a Banach space, $J \in C^1(X)$ and $x \in K_J$, $u$ is of mountain pass type if, for any open neighbourhood $U$ of $u$, the set $\{y \in U: J(y)<J(u)\}$ is nonempty and not path-connected.
\end{Def}

\noindent The following result is a variant of the mountain pass theorem \cite{PS} and establishes the existence of critical points of mountain pass type.
\begin{Teo}{\rm\cite[Theorem 6.99]{MMP}} \label{MPT}
	If $X$ is a Banach space, $J \in C^1(X)$ satisfies the (C)-condition, $x_0,x_1 \in X$, $\Gamma:=\{\gamma \in C([0,1],X): \gamma(0)=x_0, \gamma(1)=x_1\}$, 
	$c:=\inf_{\gamma \in \Gamma} \max_{t \in [0,1]} J(\gamma(t))$, and $c>\max\{J(x_0),J(x_1)\}$, then $K_{J}^c \neq \emptyset$ and, moreover, if $K_{J}^c$ is discrete, then we can find 
	$u \in K_{J}^c $ which is of mountain pass type.
\end{Teo}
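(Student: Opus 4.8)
The plan is to treat the two assertions separately, using the Cerami (C)-condition throughout in place of Palais--Smale and relying on a quantitative deformation lemma valid under (C). Set $\mu:=c-\max\{J(x_0),J(x_1)\}>0$.

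\textbf{Step 1: $K_J^c\neq\emptyset$.} I would argue by contradiction, assuming $K_J^c=\emptyset$, and first establish a quantitative noncriticality estimate: there are $\epsilon\in(0,\mu)$ and $\sigma>0$ with $(1+\|u\|)\,\|J'(u)\|_{X^*}\ge\sigma$ whenever $|J(u)-c|\le\epsilon$. If this failed, picking $u_n$ with $|J(u_n)-c|\le 1/n$ and $(1+\|u_n\|)\|J'(u_n)\|_{X^*}\le 1/n$ would give a (C)-sequence, whose limit (along a subsequence, by the (C)-condition) would belong to $K_J^c$ by continuity of $J$ and $J'$ --- a contradiction. Feeding this estimate into the (C)-version of the quantitative deformation lemma yields a continuous $\eta:[0,1]\times X\to X$ with $\eta(0,\cdot)=\mathrm{id}$, $t\mapsto J(\eta(t,u))$ nonincreasing, $\eta(t,\cdot)=\mathrm{id}$ off $J^{-1}([c-\epsilon,c+\epsilon])$, and $\eta(1,J^{c+\epsilon})\subseteq J^{c-\epsilon}$. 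Choosing $\gamma\in\Gamma$ with $\max_{t\in[0,1]}J(\gamma(t))<c+\epsilon$ and noting that $x_0,x_1\in J^{c-\epsilon}$ because $\epsilon<\mu$, the path $t\mapsto\eta(1,\gamma(t))$ lies in $\Gamma$ and satisfies $\max_{t\in[0,1]}J(\eta(1,\gamma(t)))\le c-\epsilon<c$, contradicting the definition of $c$. Hence $K_J^c\neq\emptyset$.

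\textbf{Step 2: a critical point of mountain pass type.} Assume in addition that $K_J^c$ is discrete. A sequence of pairwise distinct points of $K_J^c$ would be a (C)-sequence, hence subconvergent, contradicting discreteness, so $K_J^c=\{u_1,\dots,u_m\}$ is finite. Suppose, for contradiction, that none of the $u_i$ is of mountain pass type: each $u_i$ has an open neighbourhood $U_i$ such that $A_i:=\{y\in U_i:J(y)<c\}$ is empty or path-connected. Choose pairwise disjoint open balls $B_i\ni u_i$ with $\overline{B_i}\subseteq U_i$ and $x_0,x_1\notin\overline{B_i}$, set $\mathcal N:=\bigcup_{i=1}^m B_i$, and take $\epsilon\in(0,\mu)$ small. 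Since $K_J^c\subseteq\mathcal N$, the estimate of Step 1 remains valid on $J^{-1}([c-\epsilon,c+\epsilon])\setminus\mathcal N$ for $\epsilon$ and $\mathcal N$ small enough, so the deformation lemma applied away from $\mathcal N$ produces $\eta$ fixing $X\setminus J^{-1}([c-\epsilon,c+\epsilon])$, not increasing $J$, and with $\eta(1,J^{c+\epsilon}\setminus\mathcal N)\subseteq J^{c-\epsilon}$. For $\gamma\in\Gamma$ with $\max_{t\in[0,1]}J(\gamma(t))<c+\epsilon$ put $\tilde\gamma(t):=\eta(1,\gamma(t))$; then $\tilde\gamma\in\Gamma$ and $J(\tilde\gamma(t))\le c-\epsilon$ whenever $\tilde\gamma(t)\notin\mathcal N$. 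The set $\{t:J(\tilde\gamma(t))\ge c-\epsilon/2\}$ is compact and contained in the open set $\tilde\gamma^{-1}(\mathcal N)$, hence covered by finitely many of its connected components; each such component is an interval lying over a single $B_i\subseteq U_i$ with endpoints in $X\setminus\mathcal N$, hence in $J^{c-\epsilon}\cap U_i\subseteq A_i$, so in particular $A_i\neq\emptyset$ and $A_i$ is path-connected. I would then replace $\tilde\gamma$ on each of these finitely many intervals by an arc joining the same endpoints inside $A_i$; at all remaining times $J(\tilde\gamma(t))<c$ already. Gluing yields $\bar\gamma\in\Gamma$ with $\max_{t\in[0,1]}J(\bar\gamma(t))<c$, once more contradicting the definition of $c$. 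Therefore some $u_i$ is of mountain pass type.

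\textbf{Main obstacle.} Step 1 is the classical mountain pass argument and is essentially routine. The genuine difficulty is in Step 2: one has to set up a deformation that compresses $J^{c+\epsilon}\setminus\mathcal N$ below $c-\epsilon$ while leaving $\mathcal N$ and the lower sublevels untouched, which requires localizing the noncriticality estimate to the complement of $\mathcal N$ and handling with some care the unboundedness carried by the factor $1+\|u\|$ in the (C)-condition; and one has to control the passages of $\tilde\gamma$ through $\mathcal N$, where the compactness of $\{t:J(\tilde\gamma(t))\ge c-\epsilon/2\}$ together with the finiteness of $K_J^c$ are what guarantee only finitely many surgeries, each confined to one $U_i$. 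The "empty" alternative in the definition of $A_i$ never actually obstructs the construction, since every relevant passage has both endpoints in $A_i$.
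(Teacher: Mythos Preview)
The paper does not supply its own proof of this statement; Theorem~\ref{MPT} is quoted verbatim from \cite[Theorem 6.99]{MMP} and used as a black box in Section~\ref{sec5}. There is therefore no in-paper argument to compare against.

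That said, your sketch follows the classical route that underlies the cited result: Step~1 is the standard mountain pass existence proof via a quantitative deformation lemma under the (C)-condition, and Step~2 is Hofer's path-surgery argument (as in Pucci--Serrin \cite{PS} and the presentation in \cite{MMP}). The outline is correct and the logical structure is sound. The points that would require more detail in a full write-up are exactly the ones you flag: the precise localized (C)-deformation lemma that pushes $J^{c+\epsilon}\setminus\mathcal N$ into $J^{c-\epsilon}$ while fixing both $\mathcal N$ and the low sublevel sets, and the bookkeeping guaranteeing that each maximal subinterval of $\tilde\gamma^{-1}(\mathcal N)$ maps into a single $B_i$ with endpoints on $\partial B_i\subset U_i\cap J^{c-\epsilon}$. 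One small remark: when you replace $\tilde\gamma$ on the finitely many ``high'' intervals, you should also note that on the (possibly infinitely many) remaining components of $\tilde\gamma^{-1}(\mathcal N)$ the path already satisfies $J<c-\epsilon/2$, so no surgery is needed there and continuity of $\bar\gamma$ is preserved; you implicitly use this but it is worth stating.
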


\noindent We now describe the critical groups for critical points of mountain pass type.

\begin{Pro}{\rm\cite[Proposition 6.100]{MMP}} \label{Gcr}
	Let $X$ be a reflexive Banach space, $J \in C^1(X)$, and $u \in K_J$ isolated with $c:=J(u)$ isolated in $J(K_J)$. If $u$ is of mountain pass type, then $C_1(J,u)\neq 0$.
\end{Pro}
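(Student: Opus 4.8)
The plan is to read off $C_1(J,u)=H_1\!\left(J^{c}\cap U,\,J^{c}\cap U\setminus\{u\}\right)$ from the long exact homology sequence of that pair, the nontrivial $1$-dimensional class coming from the path-disconnectedness that the mountain-pass hypothesis imposes on the sublevel set.

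\emph{Set-up.} Since $u$ is an isolated critical point, I would first fix an open neighbourhood $U$ of $u$ with $K_J\cap\overline{U}=\{u\}$; since moreover $c$ is isolated in $J(K_J)$, after shrinking $U$ one may pick $\varepsilon>0$ such that $c$ is the only critical value of $J$ in $[c-\varepsilon,c+\varepsilon]$. By excision the critical group does not depend on this choice, so $C_1(J,u)=H_1(A,B)$ with $A:=J^{c}\cap U$ and $B:=A\setminus\{u\}$. Note $B\supseteq\{J<c\}\cap U$, which is nonempty by the mountain-pass property, so in the long exact sequence of the pair $(A,B)$ in reduced singular homology,
$$H_1(A)\longrightarrow H_1(A,B)\xrightarrow{\ \partial\ }\widetilde H_0(B)\xrightarrow{\ i_\ast\ }\widetilde H_0(A)\longrightarrow 0 ,$$
the term $H_1(A,B)$ is exactly the critical group. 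Hence it suffices to prove that $B$ is not path-connected (so $\widetilde H_0(B)\neq 0$) and that $A$ is path-connected (so $\widetilde H_0(A)=0$): then $i_\ast=0$, the boundary map $\partial$ is onto the nonzero group $\widetilde H_0(B)$, and therefore $C_1(J,u)=H_1(A,B)\neq 0$.

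\emph{$B$ is disconnected.} By the definition of a critical point of mountain-pass type, $\{J<c\}\cap U$ is nonempty and not path-connected. A point $y\in B$ with $J(y)=c$ is necessarily $\neq u$, hence a regular point, hence lies in the closure of $\{J<c\}\cap U$ and is joined to it by a short descending arc; using the local deformation lemma on $U$ (this is where the reflexivity of $X$ is invoked) together with the fact that $u$ is the only critical point in $\overline U$, one deformation retracts $B$ onto $\{J<c\}\cap U$ while avoiding $u$. Thus $B$ and $\{J<c\}\cap U$ have the same path components, so $B$ has at least two of them and $\widetilde H_0(B)\neq 0$.

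\emph{$A$ is path-connected — the main point.} Here the isolation hypotheses are used once more: because $u$ is the unique critical point of $J$ in $\overline U$ and $c$ the unique critical value in $[c-\varepsilon,c+\varepsilon]$, the set $J^{c}\cap U$ can be deformed, via a pseudo-gradient flow adapted to $U$, onto a model neighbourhood of $u$ inside its own sublevel set, and that model is path-connected through $u$ — exactly as in the finite-dimensional saddle picture $J(x,y)=x^{2}-y^{2}$, where $\{J\le 0\}$ is the union of two solid cones meeting precisely at the origin and is star-shaped about it. I expect that making this connectivity statement precise, and arranging a single $U$ for which it holds simultaneously with the mountain-pass characterisation in the previous step, is the one genuinely delicate point; once it is in hand, the exact-sequence chase above closes the proof.
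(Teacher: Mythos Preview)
The paper does not supply a proof of this proposition: it is quoted from \cite[Proposition 6.100]{MMP} as a background tool in the preliminaries on critical groups and is simply invoked later in the proof of the multiplicity theorem. There is no in-paper argument to compare against.

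Your outline follows the standard Hofer-type route, and the exact-sequence reduction to the connectivity of $A$ and $B$ is sound. Two remarks on the sketch itself. First, the appeal to ``reflexivity of $X$'' for the local deformation is vague: reflexivity alone does not furnish a deformation lemma, and the statement as recorded here carries no Palais--Smale or Cerami hypothesis. What is actually used is a \emph{local} pseudo-gradient flow near the isolated critical point $u$, together with the isolation of $c$ in $J(K_J)$ to keep the deformation inside $U$; reflexivity enters, if at all, only through technicalities of the pseudo-gradient construction. Second, the step you correctly flag as delicate is indeed the crux, and in the usual proofs it is \emph{not} handled by showing that $A=J^{c}\cap U$ is globally path-connected. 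Rather, one picks points $y_0,y_1$ in distinct components of $\{J<c\}\cap U$ and exhibits an explicit path in $A$ from $y_0$ to $y_1$ passing through $u$; this singular $1$-chain has boundary $[y_1]-[y_0]\neq 0$ in $\widetilde H_0(B)$, so $\partial\neq 0$ and $C_1(J,u)\neq 0$ directly. Your formulation (``$A$ path-connected, $B$ not'') is sufficient but stronger than needed and correspondingly harder to establish.
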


\noindent If the set of critical values of $J$ is bounded below and $J$ satisfies the (C)-condition, we define for all $k \in \mathbb{N}_0$ the \emph{k-th critical group at infinity of $J$} as
$$C_k(J,\infty)=H_k(X, J^a),$$
where $a < \inf_{u \in K_J} J(u)$.\\
\noindent We recall the \emph{Morse identity}:
\begin{Pro}{\rm\cite[Theorem 6.62 (b)]{MMP}}  \label{MI}
	Let $X$ be a Banach space and let $J \in C^1(X)$ be a functional satisfying (C)-condition such that $K_J$ is a finite set. Then, there exists a formal power series 
	$Q(t)=\sum_{k=0}^{\infty} q_k t^k \; (q_k \in \mathbb{N}_0 \; \forall k \in \mathbb{N}_0)$ such that for all $t \in \mathbb{R}$ 
	$$\sum_{k=0}^{\infty} \sum_{u \in K_J} \mathrm{dim}\, C_k(J,u) t^k = \sum_{k=0}^{\infty} \mathrm{dim}\, C_k(J,\infty) t^k + (1+t) Q(t).$$ 
\end{Pro}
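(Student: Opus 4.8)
This proposition is quoted verbatim from \cite[Theorem 6.62(b)]{MMP}, so strictly speaking the ``proof'' is a reference; for orientation I record the idea of the classical argument. The plan is to translate the analytic hypotheses into a statement about how the homotopy type of the sublevel sets $J^b$ changes as $b$ grows, to organize this into a finite filtration of $X$, and then to read off the identity by elementary homological bookkeeping.

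Since $K_J$ is finite, $J(K_J)=\{c_1<\dots<c_m\}$ is finite and bounded below, so $C_k(J,\infty)=H_k(X,J^a)$ is well defined for any $a<c_1$. Choose regular values $b_0=a<c_1<b_1<c_2<\dots<c_m<b_m$ and put $X_i=J^{b_i}$, so that $X_0\subseteq X_1\subseteq\dots\subseteq X_m$. The (C)-condition yields the two standard deformation facts. First, the \emph{noncritical interval theorem}: if $[\alpha,\beta]$ contains no critical value, then $J^\alpha$ is a strong deformation retract of $J^\beta$; applied to $[b_m,\beta]$ for $\beta$ large it shows that $X_m$ is a strong deformation retract of $X$, whence $H_k(X,X_0)\cong H_k(X,J^a)=C_k(J,\infty)$. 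Second, crossing a single critical level: combining the noncritical interval theorem on $(b_{i-1},c_i)$ and on $(c_i,b_i)$ with the second deformation lemma and excision around the (isolated, pairwise separable) points of $K_J^{c_i}$, one obtains $H_k(X_i,X_{i-1})\cong\bigoplus_{u\in K_J^{c_i}}C_k(J,u)$.

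Now set $P(t,Y,Z)=\sum_{k\ge0}\dim H_k(Y,Z)\,t^k$. The long exact homology sequence of a triple $Z\subseteq Y\subseteq W$, via the rank--nullity relation for the maps occurring in it, gives $P(t,Y,Z)+P(t,W,Y)=P(t,W,Z)+(1+t)R(t)$ with $R$ a formal power series with coefficients in $\mathbb{N}_0$. Telescoping this over the filtration $X_0\subseteq\dots\subseteq X_m$ produces
\[
\sum_{i=1}^m P(t,X_i,X_{i-1})=P(t,X_m,X_0)+(1+t)Q(t),\qquad Q(t)=\sum_{k\ge0}q_kt^k,\ q_k\in\mathbb{N}_0 .
\]
Substituting $P(t,X_i,X_{i-1})=\sum_k\sum_{u\in K_J^{c_i}}\dim C_k(J,u)\,t^k$ and $P(t,X_m,X_0)=\sum_k\dim C_k(J,\infty)\,t^k$, and using that the sets $K_J^{c_i}$ partition $K_J$ so that $\sum_i\sum_{u\in K_J^{c_i}}=\sum_{u\in K_J}$, yields exactly the asserted Morse identity.

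I expect the genuinely delicate point to be the deformation theory under the mere Cerami condition rather than the full Palais--Smale condition: one builds the deformations from a locally Lipschitz pseudo-gradient-type vector field and must exploit the weight $(1+\|u\|)$ built into the (C)-condition to ensure that the negative flow does not escape to infinity in finite time on the relevant bounded-energy strips, and one has to separate the finitely many isolated critical points that may sit on a common level. The algebraic telescoping in the last paragraph, by contrast, is routine homological algebra; finiteness of $K_J$ is exactly what makes the filtration finite and guarantees that all the sublevel-set pairs involved carry the finiteness properties needed for the bookkeeping.
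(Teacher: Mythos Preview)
Your reading is correct: in the paper this proposition is not proved at all but merely cited from \cite[Theorem 6.62(b)]{MMP}, so there is no ``paper's own proof'' to compare against. The sketch you supply is the standard Morse-relation argument (noncritical-interval and second deformation lemmas under the Cerami condition, excision around isolated critical points, then telescoping the Poincar\'e polynomials of the filtration via the long exact sequence of triples), and it is accurate as an outline; your caveat about the deformation theory under the (C)-condition is also well placed.
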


\section{Results}\label{sec4}

\noindent
This section is divided in the following way: in Subsection 1 we prove a priori bound for the weak solution of problem (\ref{P}), in the subcritical and critical case, and we recall some preliminary results, including the weak and strong maximum principles, and a Hopf lemma.
In Subsection 2 we prove the equivalence of minimizers in the $X(\Omega)$-topology and in $C_{\delta}^0({\overline{\Omega}})$-topology, respectively; in Subsection 3 we consider an eigenvalue problem for nonlocal, anisotropic operator $L_K$.

\subsection{$L^{\infty}$ bound on the weak solutions}\label{subsec41}
\noindent We prove an $L^{\infty}$ bound on the weak solutions of \eqref{P} (in the subcritical case such bound is uniform)\cite[Theorem 3.2]{IMS}.
\begin{Teo} \label{SL}
	If $f$ satisfies the growth condition \eqref{G}, then for any weak solution $u \in X(\Omega)$ of \eqref{P} we have $u \in L^{\infty}(\Omega)$. Moreover, if $q<2_{s}^{*}$ in \eqref{G}, then there exists a function 
	$M \in C(\mathbb{R_+})$, only depending on the constants $C$, $n$, $s$ and $\Omega$, such that $$||u||_{\infty} \leq M(||u||_{2_{s}^{*}}).$$
\end{Teo}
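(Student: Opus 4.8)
The plan is to run a De Giorgi–Stampacchia truncation argument adapted to the nonlocal Gagliardo seminorm $[\cdot]_K$, exactly as in \cite[Theorem 3.2]{IMS} for the fractional Laplacian, using that property \eqref{P3} makes $[\cdot]_K$ comparable from below to the standard $H^s_0$ seminorm, and that $X(\Omega)\hookrightarrow L^{2^*_s}(\Omega)$ continuously (and compactly below the critical exponent). First I would fix a weak solution $u\in X(\Omega)$ and test the weak formulation \eqref{Fd} with the truncated function $v=u_k:=(u-k)^+\in X(\Omega)$ for $k>0$. The crucial algebraic point is the pointwise inequality
\[
(u(x)-u(y))\bigl(u_k(x)-u_k(y)\bigr)\ \geq\ |u_k(x)-u_k(y)|^2,
\]
valid for a.e.\ $x,y$, which turns the left-hand side of \eqref{Fd} into a lower bound by $[u_k]_K^2\geq c\,\|u_k\|_{2^*_s}^2$. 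On the right-hand side, the growth condition \eqref{G} gives
\[
\int_\Omega f(x,u)u_k\,dx\ \leq\ C\int_{A_k}(1+|u|^{q-1})|u-k|\,dx,
\]
where $A_k:=\{x\in\Omega:u(x)>k\}$, and one estimates this via Hölder's inequality in terms of $\|u\|_{2^*_s}$, $|A_k|$, and $\|u_k\|_{2^*_s}$.

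Next I would organize the iteration. Writing $\varphi(k):=\|u_k\|_{2^*_s}$ and combining the two sides yields a recursive inequality of the form $\varphi(k)\leq C\,|A_k|^{\sigma}\bigl(1+\text{lower order}\bigr)$ with an exponent $\sigma>0$ coming from the gap between $q$ and $2^*_s$; together with the elementary bound $|A_h|\leq (h-k)^{-2^*_s}\varphi(k)^{2^*_s}$ for $h>k$, this feeds into the classical lemma of Stampacchia (e.g.\ \cite[Lemma B.1]{IMS} or the analogous statement): a nonnegative nonincreasing $\psi(k)$ satisfying $\psi(h)\leq M(h-k)^{-\alpha}\psi(k)^{\beta}$ with $\beta>1$ vanishes beyond a finite level $k_0$, and $k_0$ is explicitly controlled by $M$, $\alpha$, $\beta$ and $\psi(k_1)$ for any starting $k_1$. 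Applying this with the natural choice of starting level $k_1$ (e.g.\ $k_1=1$ or $k_1$ proportional to $\|u\|_{2^*_s}$) gives $u\leq k_0$ a.e.; repeating the argument with $(-u-k)^+$ gives the lower bound, hence $u\in L^\infty(\Omega)$ with $\|u\|_\infty$ controlled.

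In the subcritical case $q<2^*_s$ one has to be more careful to make the bound \emph{uniform}, i.e.\ to produce the continuous function $M\in C(\mathbb{R}_+)$ with $\|u\|_\infty\leq M(\|u\|_{2^*_s})$. Here the point is that the exponent $\sigma$ in the recursion is strictly positive precisely because $q<2^*_s$, so tracking constants through Stampacchia's lemma shows that $k_0$ depends on $u$ only through $\|u\|_{2^*_s}$ (and on $C,n,s,\Omega$), in a way that is continuous and nondecreasing in that argument; this yields $M$. In the critical case $q=2^*_s$ the gap closes and the absorption on the right-hand side is no longer automatic — one must first run a preliminary step, choosing $k_1$ large enough (depending on $u$ itself, via the absolute continuity of the integral $\int_{A_{k_1}}|u|^{2^*_s}$) so that the critical term is a small fraction of $\|u_{k_1}\|_{2^*_s}^2$ and can be absorbed into the left-hand side, after which the subcritical-type iteration runs from level $k_1$ onward. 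I expect the main obstacle to be exactly this absorption in the critical case: making the threshold $k_1$ precise and verifying that the resulting $L^\infty$ bound, while finite for each $u$, genuinely cannot be taken uniform in $\|u\|_{2^*_s}$ — consistent with the theorem only claiming uniformity when $q<2^*_s$. The replacement of the fractional Laplacian kernel by the general $K$ costs nothing beyond invoking \eqref{P3} once, and the evenness \eqref{P4} which is already built into the symmetric bilinear form.
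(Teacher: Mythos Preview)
Your argument is correct but follows a different route from the paper. The paper adopts the Moser-type iteration of \cite[Theorem 3.2]{IMS}: one tests \eqref{Fd} with $v=u|u|_k^{r-2}$ (where $|u|_k=\min\{|u|,k\}$), uses \eqref{P3} together with the elementary inequality of \cite[Lemma 3.1]{IMS} to obtain
\[
\bigl\|\,u|u|_k^{\frac r2-1}\,\bigr\|_{2_s^*}^2\ \leq\ C\,r\int_\Omega |f(x,u)|\,|u|\,|u|_k^{r-2}\,dx,
\]
lets $k\to\infty$ via Fatou, and then bootstraps on the exponent $r$ along the geometric sequence $r\mapsto \gamma^2 r$ with $\gamma=(2_s^*/2)^{1/2}$ to push $\|u\|_p$ up to $p=\infty$. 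Your De Giorgi--Stampacchia scheme instead iterates on the \emph{level} $k$ via the test function $(u-k)^+$ and the decay lemma for $|A_k|$. Both are standard and both handle the critical case by a preliminary absorption step (you via choosing $k_1$ large so that $\int_{A_{k_1}}|u|^{2_s^*}$ is small; the Moser route via an analogous smallness at the start of the bootstrap), which is exactly why uniformity in $\|u\|_{2_s^*}$ is lost when $q=2_s^*$. The Moser approach has the advantage of producing the full scale of $L^p$ estimates along the way and of plugging directly into the existing machinery of \cite{IMS}; your level-set approach is arguably lighter in bookkeeping and makes the dependence of $M$ on $\|u\|_{2_s^*}$ in the subcritical case slightly more transparent. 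Either way, the only place the anisotropic kernel enters is through \eqref{P3}, as you note.
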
	

\begin{proof}
	Let $u \in X(\Omega)$ be a weak solution of \eqref{P} and set $\gamma=(2_s^*/2)^{1/2}$ and $t_k=sgn(t) \min\{|t|,k\}$ for all $t \in \mathbb{R}$ and $k>0$.
	We define $v=u|u|_k^{r-2}$, for all $r \geq 2$, $k>0$, $v \in X(\Omega)$.
	By (\ref{P3}) and applying the fractional Sobolev inequality we have that
	$$||u|u|_k^{\frac{r}{2}-1}||_{2_s^*}^2 \leq C ||u|u|_k^{\frac{r}{2}-1}||_{H_0^s}^2 \leq 
	\frac{C}{\beta} ||u|u|_k^{\frac{r}{2}-1}||^2.$$
	By \cite[Lemma 3.1]{IMS} and assuming $v$ as test function in \eqref{Fd}, we obtain 
	$$||u|u|_k^{\frac{r}{2}-1}||_{2_s^*}^2  \leq C ||u|u|_k^{\frac{r}{2}-1}||^2
	\leq \frac{C r^2}{r-1} \left\langle u,v\right\rangle_{X(\Omega)}
	\leq C r \int_{\Omega} |f(x,u)| |v|\,dx,$$
	for some $C>0$ independent of $r \geq 2$ and $k>0$. Applying $\eqref{G}$ and the Fatou Lemma as 
	$k \rightarrow \infty$ yields
	$$||u||_{\gamma^2 r} \leq C r^{1/r} \left( \int_{\Omega} (|u|^{r-1} + |u|^{r+q-2})\,dx\right) ^{1/r}.$$
	The rest of the proof follows arguing as in \cite{IMS}, using a suitable bootstrap argument, providing in the end $u \in L^{\infty}(\Omega)$. The main difference is that such bound is uniform only in the subcritical case and not in the critical case.
\end{proof}	

\noindent Theorem \ref{SL}  allows to set $g(x):=f(x,u(x)) \in L^{\infty}(\mathbb{R}^n)$ and now we 
rephrase the problem as a linear Dirichlet problem
\begin{equation}
\begin{cases}
\mathit{L}_K u = g(x)  & \text{in $\Omega$} \\
u = 0 & \text{in $\mathbb{R}^n \setminus \Omega$,}
\end{cases}
\label{L}
\end{equation}
with $g \in L^\infty(\Omega)$.

\begin{Pro}{\rm\cite[Proposition 4.1, Weak  maximum principle]{RO}} \label{WmP}
	Let $u$ be any weak solution to \eqref{L}, with $g \geq 0$ in $\Omega$. Then, $u \geq 0$ in $\Omega$.
\end{Pro}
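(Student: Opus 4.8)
\emph{Proof plan.} The plan is to run the standard truncation argument: test the weak formulation of \eqref{L} (which is \eqref{Fd} with $g(x)$ in place of $f(x,u(x))$) against the negative part of $u$. Writing $u^-=\max\{-u,0\}\geq 0$, so that $u=u^+-u^-$ with $u^\pm\geq 0$, I would first check that $u^-$ is an admissible test function: since $u=0$ a.e.\ in $\mathbb{R}^n\setminus\Omega$ we have $u^-=0$ there as well, and the pointwise bound $|u^-(x)-u^-(y)|\leq|u(x)-u(y)|$ gives $[u^-]_K\leq[u]_K<\infty$, so $u^-\in X(\Omega)$. Choosing $v=u^-$ then yields
\[
\int_{\mathbb{R}^{2n}}(u(x)-u(y))\bigl(u^-(x)-u^-(y)\bigr)K(x-y)\,dxdy=\int_{\Omega}g(x)\,u^-(x)\,dx\;\geq\;0,
\]
the inequality because $g\geq 0$ in $\Omega$ and $u^-\geq 0$.

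The key point is the pointwise inequality
\[
(u(x)-u(y))\bigl(u^-(x)-u^-(y)\bigr)\;\leq\;-\bigl(u^-(x)-u^-(y)\bigr)^2\qquad\text{for a.e.\ }x,y\in\mathbb{R}^n,
\]
which, after substituting $u=u^+-u^-$, reduces to $(u^+(x)-u^+(y))(u^-(x)-u^-(y))\leq 0$. I would verify this by a short case analysis on the signs of $u(x)$ and $u(y)$: when they have the same sign one of the two factors vanishes, and when they have opposite signs the two factors have opposite signs. Multiplying by the positive kernel $K$ and integrating over $\mathbb{R}^{2n}$, the left-hand side of the displayed identity is then $\leq -[u^-]_K^2$, so combining with the first display we obtain $0\leq\int_\Omega g\,u^-\,dx\leq -[u^-]_K^2\leq 0$, whence $[u^-]_K=0$.

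To conclude, since $X(\Omega)\hookrightarrow H^s_0(\Omega)\hookrightarrow L^2(\Omega)$ — equivalently, since the Poincar\'e inequality holds under \eqref{P3} — the seminorm $[\,\cdot\,]_K$ is actually a norm on $X(\Omega)$, so $[u^-]_K=0$ forces $u^-=0$ a.e.\ in $\mathbb{R}^n$, i.e.\ $u\geq 0$ a.e.\ in $\Omega$. I do not expect any genuine obstacle in this argument; the only two points that need a little care are the admissibility of $u^-$ as a test function (the truncation estimate for $[\,\cdot\,]_K$) and the elementary sign analysis above. Alternatively, one can simply invoke \cite[Proposition 4.1]{RO} verbatim, since the kernel $K$ of Definition \ref{D1} satisfies the structural assumptions \eqref{P2}--\eqref{P4} used there.
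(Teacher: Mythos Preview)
Your argument is correct and is the standard one. The paper does not actually prove Proposition~\ref{WmP}: it simply cites \cite[Proposition~4.1]{RO}. However, in the Appendix the paper proves a generalised weak maximum principle (for problem~\eqref{DNO} with non-homogeneous data $h\geq 0$ and reaction satisfying \eqref{SF}), and that proof follows exactly your template: test with $v=u^-$, use the pointwise inequality $(u^+(x)-u^+(y))(u^-(x)-u^-(y))\leq 0$, and conclude. The only cosmetic difference is that the Appendix argues by contradiction (assuming $u^-\not\equiv 0$ and obtaining opposite strict signs on the two sides) and splits the integral over $\Omega\times\Omega$ and $\Omega\times(\mathbb{R}^n\setminus\Omega)$ to handle the non-homogeneous datum, whereas you work directly with $[u^-]_K$ and invoke Poincar\'e; in the homogeneous case these are equivalent presentations of the same computation.
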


\noindent We observe that the weak maximum principle also holds when the Dirichlet datum is given by $u=h$, with $h \geq 0$ in $\mathbb{R}^n \setminus \Omega$.\\
For problem \eqref{L}, the interior regularity of solutions depends on the regularity of $g$, but it also depends on the regularity of $K(y)$ in the $y$-variable. Furthermore, if the kernel $K$ is not regular, then the interior regularity of $u$ will in addition depend on the boundary regularity of $u$.

\begin{Teo}{\rm\cite[Theorem 6.1, Interior regularity]{RO}} \label{IR}
	Let $\alpha>0$ be such that $\alpha + 2s$ is not an integer, and $u \in L^{\infty}(\mathbb{R}^n)$ be any weak solution to $L_K u=g$ in $B_1$. Then,
	$$||u||_{C^{2s+\alpha}(B_{1/2})}\leq C(||g||_{C^{\alpha}(B_1)}+||u||_{C^{\alpha}(\mathbb{R}^n)}).$$
\end{Teo}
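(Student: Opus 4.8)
\emph{Proof strategy.} The plan is to establish this as an \emph{a priori estimate} by the rescaling--compactness (blow-up) method, exploiting the two structural facts that $L_K$ is translation invariant (its kernel depends only on $x-y$) and scales with weight $2s$, i.e.\ $L_K\bigl(u(\lambda\,\cdot)\bigr)(x)=\lambda^{2s}(L_Ku)(\lambda x)$. Set $m=\lfloor 2s+\alpha\rfloor$ (well defined since $2s+\alpha\notin\mathbb{Z}$) and let $\mathcal{P}_m$ denote the polynomials of degree $\le m$. By a standard mollification argument (using that $L_K$ commutes with convolution) it suffices to prove, for smooth $u$ and with $C$ independent of the smoothness,
$$\sup_{x_0\in B_{1/2}}\;\sup_{0<\rho\le 1/4}\;\rho^{-(2s+\alpha)}\inf_{P\in\mathcal{P}_m}\|u-P\|_{L^\infty(B_\rho(x_0))}\le C\bigl(\|g\|_{C^{\alpha}(B_1)}+\|u\|_{C^{\alpha}(\mathbb{R}^n)}\bigr),$$
since a Campanato-type characterization identifies the left-hand side with a constant times $[u]_{C^{2s+\alpha}(B_{1/2})}$, the lower-order norms being absorbed into $\|u\|_{C^\alpha(\mathbb{R}^n)}$. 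After subtracting from $u$ an appropriate polynomial in $\mathcal{P}_m$ whose image under $L_K$ matches the Taylor expansion of $g$ at $x_0$ up to the relevant order (and, when $2s+\alpha<2$, a fixed smooth particular solution of $L_K\phi=g(x_0)$ near $x_0$, e.g.\ built from the explicit torsion-type barrier for $L_K$ in \cite{RO}), one may assume that the right-hand side vanishes to the correct order at the base point; this is what forces the blown-up equation to have vanishing data in the limit.

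I would then argue by contradiction: if the displayed bound fails, there are $u_k,g_k$ with $L_Ku_k=g_k$ in $B_1$ and $\|g_k\|_{C^\alpha(B_1)}+\|u_k\|_{C^\alpha(\mathbb{R}^n)}\le 1$, but with the left-hand quantity $\ge k$. Using the classical monotone-envelope device one extracts base points $x_k\in B_{1/2}$, scales $\rho_k\downarrow 0$ and optimal polynomials $P_k$ on $B_{\rho_k}(x_k)$ such that, setting
$$v_k(x)=\frac{u_k(x_k+\rho_k x)-P_k(x_k+\rho_k x)}{\rho_k^{\,2s+\alpha}\,\vartheta_k},\qquad \vartheta_k:=\rho_k^{-(2s+\alpha)}\inf_{P\in\mathcal{P}_m}\|u_k-P\|_{L^\infty(B_{\rho_k}(x_k))}\to\infty,$$
one has: (i) $\inf_{P\in\mathcal{P}_m}\|v_k-P\|_{L^\infty(B_1)}=1$; (ii) a uniform growth bound $\|v_k\|_{L^\infty(B_R)}\le CR^{2s+\alpha}$ for $1\le R\le(4\rho_k)^{-1}$, coming precisely from the envelope construction; and (iii), using translation invariance, the $2s$-homogeneous scaling of $L_K$, the fact that $L_K$ annihilates polynomials of degree $\le 1$ (evenness of $K$) and maps those of higher degree to polynomials of degree two less, with all coefficients finite because $s<1$, together with the reduction at the end of the previous paragraph, the equation $L_Kv_k=\tilde g_k$ in $B_{(4\rho_k)^{-1}}$ with $\sup_{B_R}|\tilde g_k|\le CR^{\alpha}/\vartheta_k\to0$ for each fixed $R$.

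Finally I would pass to the limit. The interior Hölder estimate for $L_K$ with a merely measurable, uniformly elliptic kernel (the nonlocal Krylov--Safonov estimate, which uses only $\inf_{S^{n-1}}a>0$), combined with the growth bound (ii), yields local $C^{\gamma}$-bounds for the $v_k$ that are uniform on compact subsets of $\mathbb{R}^n$; hence, along a subsequence, $v_k\to v$ locally uniformly, with $|v(x)|\le C(1+|x|)^{2s+\alpha}$, $L_Kv=0$ in all of $\mathbb{R}^n$, and $\inf_{P\in\mathcal{P}_m}\|v-P\|_{L^\infty(B_1)}=1$ by (i). This contradicts the Liouville theorem for $L_K$ — that an entire solution of $L_Kv=0$ growing no faster than $(1+|x|)^{2s+\alpha}$ must be a polynomial of degree $\le m$, whence the infimum in (i) would be $0$ — a statement proved by a separate rescaling argument based on the same interior Hölder estimate (alternatively, via the Fourier symbol $S$ of $L_K$, which is homogeneous of degree $2s$ and, since $\inf_{S^{n-1}}a>0$, satisfies $S(\xi)\ge c|\xi|^{2s}>0$ for $\xi\ne0$, so that $S\widehat v=0$ forces $\operatorname{supp}\widehat v\subseteq\{0\}$).

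I expect the main obstacle to be the verification of (iii), that is, that the rescaled right-hand sides $\tilde g_k$ really tend to $0$: this requires estimating the nonlocal tails of $u_k-P_k$ over complements of large balls, where the subtracted polynomial grows, and it is exactly here that the non-integrality of $2s+\alpha$, the precise exponent in the normalization of $v_k$, and the envelope growth bound (ii) must be used together; it is also what dictates the particular-solution correction in the low-order range $2s+\alpha<2$. A secondary, more technical point is that the two external inputs invoked above — the Krylov--Safonov interior Hölder estimate and the Liouville theorem — have to be available for the possibly very irregular anisotropic kernel $K$, for which the sole structural assumption is the ellipticity $\inf_{S^{n-1}}a>0$.
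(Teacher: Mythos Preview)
The paper does not supply its own proof of this statement: Theorem~\ref{IR} is quoted verbatim from \cite[Theorem~6.1]{RO} and used as a black box. So there is no argument in the present paper to compare your proposal against.

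That said, your overall strategy --- Campanato characterisation, contradiction via blow-up, compactness from a nonlocal Krylov--Safonov estimate, and a Liouville theorem for entire solutions with controlled polynomial growth --- is exactly the standard route to Schauder-type interior estimates for translation-invariant integro-differential operators, and is in essence the method behind the result cited from \cite{RO} (ultimately going back to \cite{ROV} and related works).

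There is, however, a concrete error in step~(iii). You assert that $L_K$ ``maps [polynomials] of higher degree to polynomials of degree two less, with all coefficients finite because $s<1$''. This is false. For a quadratic $P(x)=x_ix_j$ the second-order difference is $-2z_iz_j$, and
\[
\int_{\mathbb{R}^n} z_iz_j\,K(z)\,dz \;=\; \int_{S^{n-1}}\theta_i\theta_j\,a(\theta)\,d\theta\;\int_0^\infty r^{\,1-2s}\,dr
\]
diverges at infinity precisely \emph{because} $s<1$. Hence, as soon as $m=\lfloor 2s+\alpha\rfloor\ge 2$ (that is, $2s+\alpha>2$), the subtracted polynomial $P_k$ is not in the domain of $L_K$ on $\mathbb{R}^n$, and your computation of $L_Kv_k$ as ``$\tilde g_k$ plus the image of a polynomial'' does not make sense as written. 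The usual remedies are either to run the whole argument through discrete difference quotients (prove first that first-order increments of $u$ lie in $C^{2s+\alpha-1}$, then iterate), or to keep the polynomial subtraction localised to a large ball and track the resulting nonlocal tail explicitly; in either case this is where the growth bound~(ii) and the non-integrality of $2s+\alpha$ genuinely enter, and it is substantially more delicate than your sketch indicates. Your identification of~(iii) as the main obstacle is therefore correct, but the specific mechanism you propose for handling it does not work.
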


\noindent It is important to remark that the previous estimate is valid also in case $\alpha =0$ (in which the $C^{\alpha}$ norm has to be replaced by the $L^{\infty}$).
With no further regularity assumption on the kernel $K$, this estimate is sharp, in the sense that the norm $||u||_{C^{\alpha}(\mathbb{R}^n)}$ can not be replaced by a weaker one.
Under the extra assumption that the kernel $K(y)$ is $C^{\alpha}$ outside the origin, the following estimate holds
$$||u||_{C^{2s+\alpha}(B_{1/2})}\leq C(||g||_{C^{\alpha}(B_1)}+||u||_{L^{\infty}(\mathbb{R}^n)}).$$

\noindent We focus now on the boundary regularity of solutions to \eqref{L}.

\begin{Pro}{\rm\cite[Proposition 7.2, Optimal H\"{o}lder regularity]{RO}}  \label{Opt}
	Let $g \in L^{\infty}(\Omega)$, and $u$ be the weak solution of \eqref{L}. Then, $$||u||_{C^s(\overline{\Omega})} \leq C ||g||_{L^{\infty}(\Omega)},$$
	for some positive constant $c$.
\end{Pro}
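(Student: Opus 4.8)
The plan is to adapt the barrier–and–scaling scheme of Ros Oton. Throughout write $d(x):=\mathrm{dist}(x,\mathbb{R}^n\setminus\Omega)$. The proof rests on three ingredients: (i) a global bound $\|u\|_{L^\infty(\mathbb{R}^n)}\le C\|g\|_{L^\infty(\Omega)}$; (ii) a boundary decay estimate $|u(x)|\le C\|g\|_{L^\infty(\Omega)}\,d(x)^s$ for every $x\in\Omega$; and (iii) interior H\"older estimates with the correct scaling, coming from Theorem \ref{IR}. Granting these, (ii) and (iii) are glued together by an elementary two–point interpolation argument to produce the global bound $\|u\|_{C^s(\overline\Omega)}\le C\|g\|_{L^\infty(\Omega)}$.

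For (i) and (ii) I would argue by comparison. By the barriers constructed in \cite{RO,ROV} — which use precisely $\inf_{S^{n-1}}a>0$, i.e.\ property \eqref{P3}, together with the uniform interior and exterior ball conditions satisfied by the $C^{1,1}$ domain $\Omega$ — there is a function $w$ with $w=0$ in $\mathbb{R}^n\setminus\Omega$, $0\le w(x)\le C\min\{1,d(x)^s\}$ in $\Omega$, and $L_K w\ge 1$ in $\Omega$ in the weak sense. Then $L_K\bigl(\|g\|_{L^\infty(\Omega)}w\mp u\bigr)=\|g\|_{L^\infty(\Omega)}L_K w\mp g\ge \|g\|_{L^\infty(\Omega)}-|g|\ge 0$ in $\Omega$, while $\|g\|_{L^\infty(\Omega)}w\mp u=0$ outside $\Omega$; hence Proposition \ref{WmP} (applied to each sign) gives $|u(x)|\le \|g\|_{L^\infty(\Omega)}\,w(x)$ on $\Omega$, and since $d\le\mathrm{diam}\,\Omega$ this yields both (i) and (ii).

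For (iii), fix $x_0\in\Omega$ and set $\rho:=d(x_0)/2$. Since the kernel $K$ is positively homogeneous of degree $-(n+2s)$, the rescaled function $\bar u(y):=\rho^{-s}u(x_0+\rho y)$ solves $L_K\bar u=\bar g$ in $B_1$ with $\bar g(y)=\rho^{s}g(x_0+\rho y)$, so $\|\bar g\|_{L^\infty(B_1)}\le C\|g\|_{L^\infty(\Omega)}$. From (ii) and $d(x_0+\rho y)\le\rho(2+|y|)$ we get $\|\bar u\|_{L^\infty(B_1)}\le C\|g\|_{L^\infty(\Omega)}$ and $|\bar u(y)|\le C\|g\|_{L^\infty(\Omega)}(1+|y|)^s$ for all $y$, so the nonlocal tail $\int_{\mathbb{R}^n}(1+|y|)^{-(n+2s)}|\bar u(y)|\,dy$ is $\le C\|g\|_{L^\infty(\Omega)}$. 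Applying the interior estimate of Theorem \ref{IR} with $\alpha=0$ (in the form that bounds $\bar u$ on $B_{1/2}$ through $\|\bar u\|_{L^\infty(B_1)}$ and the tail integral above, and — should $2s=1$ — with a harmless exponent $\gamma\in(s,2s)$ in place of $2s$) gives $[\bar u]_{C^{\gamma}(B_{1/2})}\le C\|g\|_{L^\infty(\Omega)}$; undoing the scaling, $[u]_{C^{\gamma}(B_{\rho/2}(x_0))}\le C\|g\|_{L^\infty(\Omega)}\,\rho^{s-\gamma}$ for a fixed $\gamma>s$. Finally, for $x,y\in\overline\Omega$ one distinguishes the cases $|x-y|\ge\frac14\max\{d(x),d(y)\}$ — where (ii) directly gives $|u(x)-u(y)|\le C\|g\|_{L^\infty(\Omega)}|x-y|^s$, also when one point lies outside $\Omega$ — and $|x-y|<\frac14\max\{d(x),d(y)\}$ — where, taking $x$ the point of larger distance, one has $y\in B_{\rho/2}(x)$ with $\rho=d(x)/2$, and the scaled interior estimate together with $|x-y|^{\gamma-s}\le(d(x)/4)^{\gamma-s}$ yields the same bound. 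Combined with (i) this gives $\|u\|_{C^s(\overline\Omega)}\le C\|g\|_{L^\infty(\Omega)}$ (see also \cite{IMS} for this type of interpolation).

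The main obstacle is ingredient (ii): producing a barrier $w$ with $L_K w\ge 1$ near $\partial\Omega$ and $w\asymp d^s$ relying only on $\inf_{S^{n-1}}a>0$ and $a\in L^1(S^{n-1})$. Unlike for $(-\Delta)^s$, no explicit $s$-harmonic function or Fourier-symbol computation is available, so one must either import the barriers already built in \cite{RO,ROV} or construct them by hand from the interior/exterior ball condition, which is the technical heart of the argument. A secondary technical point is checking that the interior estimate of Theorem \ref{IR}, once rescaled, may legitimately be fed the tail-weighted $L^\infty$ norm rather than the full $\|\cdot\|_{L^\infty(\mathbb{R}^n)}$ norm, together with the borderline exponent $2s=1$.
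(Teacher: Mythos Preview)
The paper does not supply its own proof of this proposition: it is quoted verbatim from \cite[Proposition~7.2]{RO} and used thereafter as a black box. Your proposal is therefore not to be compared with anything in the present paper, but rather with the argument in the cited source --- and indeed your barrier-plus-rescaling scheme is precisely the method of \cite{RO,ROV}: the $d^s$ supersolution gives (i)--(ii) via Proposition~\ref{WmP}, the homogeneity $K(\xi/\rho)=\rho^{n+2s}K(\xi)$ gives the clean rescaling in (iii), and the two-regime interpolation glues boundary and interior. Your identification of the two genuine technical debts (constructing the barrier with only $a\in L^1(S^{n-1})$, $\inf a>0$, and upgrading Theorem~\ref{IR} to accept a tail-weighted norm since $\|\bar u\|_{L^\infty(\mathbb{R}^n)}$ scales like $\rho^{-s}$) is accurate; both are handled in \cite{RO,ROV} and can legitimately be imported.
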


\noindent Finally, we conclude that the solutions to \eqref{L} are $C^{3s}$ inside $\Omega$ whenever $g \in C^s$, but only $C^s$ on the boundary, and this is the best regularity that we can obtain. For instance, we consider the following torsion problem 
\[
\begin{cases}
\mathit{L}_K u = 1  & \text{in $B_1$} \\
u = 0 & \text{in $\mathbb{R}^n \setminus B_1$,}
\end{cases}
\]
The solution $u_0:= (1-|x|^2)_{+}^s$ belongs to $C^s(\overline{B_1})$, but $u_0 \notin C^{s+\epsilon}(\overline{B_1})$ for any $\epsilon > 0$, as a consequence we can not expect solutions to be better than $C^s(\overline{\Omega})$.

\noindent While solutions of fractional equations exhibit good interior regularity properties, they may have a singular behaviour on the boundary. Therefore, instead of the usual space $C^1(\overline{\Omega})$, they are better embedded in the following weighted H\"{o}lder-type spaces $C_{\delta}^{0}(\overline{\Omega})$ and $C_{\delta}^{\alpha}(\overline{\Omega})$ as defined here below.\\
We set $\delta(x)=\mathrm{dist}(x,\mathbb{R}^n \setminus \Omega)$ with $x \in \overline{\Omega}$ and we define
$$C_{\delta}^0(\overline{\Omega})=\{u\in C^0(\overline{\Omega}):u/\delta^s \in C^0(\overline{\Omega})\},$$
$$C_{\delta}^{\alpha}(\overline{\Omega})=\{u\in C^0(\overline{\Omega}):u/\delta^s \in C^{\alpha}(\overline{\Omega})\} \quad (\alpha \in (0,1)),$$
endowed with the norms
$$||u||_{0,\delta}= \left\|\cfrac{u}{\delta^s}\right\|_{\infty}, \quad 
||u||_{\alpha,\delta}= ||u||_{0,\delta} +
\sup_{x \neq y} \frac{|u(x)/\delta^s(x) - u(y)/\delta^s(y)|}{|x-y|^{\alpha}},$$
respectively. For all $0 \leq \alpha < \beta <1$ the embedding $C_{\delta}^{\beta}(\overline{\Omega}) \hookrightarrow C_{\delta}^{\alpha}(\overline{\Omega})$ is continuous and compact. In this case, the positive cone $C_{\delta}^0(\overline{\Omega})_{+}$
has a nonempty interior given by
$$\mathrm{int}(C_{\delta}^0(\overline{\Omega})_{+})=\left\{u \in C_{\delta}^0(\overline{\Omega}): \frac{u(x)}{\delta^s(x)}>0 \text{ for all } x \in \overline{\Omega} \right\}.$$

\noindent The function $\frac{u}{\delta^s}$ on $\partial \Omega$ plays sometimes the role that the normal derivative $\frac{\partial u}{\partial \nu}$ plays in second order equations.
Furthermore, we recall that another fractional normal derivative can be considered, namely the one in formula (1.2) of \cite{DROV}.
\begin{Lem}{\rm\cite[Lemma 7.3, Hopf's lemma]{RO}}  \label{Hopf}
	Let $u$ be any weak solution to \eqref{L}, with $g \geq 0$. Then, either 
	$$u \geq c \delta^s \qquad \text{in }  \overline{\Omega} \text{ for some } \; c>0 \quad
	\text{or} \quad u \equiv 0 \text{ in } \overline{\Omega}.$$
\end{Lem}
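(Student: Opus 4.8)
The plan is to combine the weak maximum principle, a strong maximum principle, and a boundary barrier adapted to the interior ball condition of $\partial\Omega$. First I would invoke Proposition \ref{WmP}, which gives $u \ge 0$ in $\Omega$, hence in all of $\mathbb{R}^n$ since $u=0$ outside $\Omega$. If $u \equiv 0$ we land in the second alternative, so from here on the aim is: assuming $u \not\equiv 0$, produce a constant $c>0$ with $u \ge c\,\delta^s$ on $\overline{\Omega}$. The next step is a strong maximum principle asserting $u>0$ throughout $\Omega$: by the interior regularity Theorem \ref{IR} together with $g \in L^\infty(\Omega)$, $u$ is regular enough inside $\Omega$ for $L_K u$ to be evaluated pointwise there (when $s<\frac12$ the integral in \eqref{E1} is not even singular, cf. Remark \ref{Oss1}), and if $u(x_0)=0$ at an interior point $x_0$ then \eqref{E1} yields
$$0 \le g(x_0) = L_K u(x_0) = -\int_{\mathbb{R}^n} u(y)\, K(x_0-y)\, dy \le 0.$$
Since $K>0$ on $\mathbb{R}^n\setminus\{0\}$ by \eqref{P3} and $u \ge 0$, this forces $u \equiv 0$, a contradiction.

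For the quantitative lower bound I would fix the small radius $r_0>0$ furnished by the uniform interior ball condition (valid since $\partial\Omega$ is $C^{1,1}$) and split $\overline{\Omega}$ into the interior region $\{\delta \ge r_0/2\}$ and the boundary strip $\{0<\delta<r_0/2\}$. On the compact set $\{\delta \ge r_0/2\}$ the continuous function $u$ is strictly positive, so $m:=\min_{\{\delta\ge r_0/2\}} u>0$ and hence $u \ge m\,(\mathrm{diam}\,\Omega)^{-s}\,\delta^s$ there. On the strip, given $x$ with $0<\delta(x)<r_0/2$ I would take its nearest boundary point $y_0=\mathrm{proj}_{\partial\Omega}(x)$ and the touching interior ball $B_{r_0}(z_0) \subset \Omega$ with $z_0:=y_0+r_0\,\nu(y_0)$; then $x$ lies on the segment $[y_0,z_0]$ with $r_0-|x-z_0|=\delta(x)$, and $x \in A:=B_{r_0}(z_0)\setminus\overline{B_{r_0/2}(z_0)}$. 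I would then compare $u$ on $A$ with a rescaled barrier $\psi:=\eta\,\chi(\,\cdot-z_0)$, where $\chi \ge 0$ is a fixed radial profile supported in $B_{r_0}$ satisfying $L_K\chi \le 0$ in $A$ and $\chi(x) \ge c_0\,(r_0-|x-z_0|)^s$ on $A$. Because $B_{r_0/2}(z_0) \subset \{\delta \ge r_0/2\}$ we have $u \ge m$ on that inner ball, so taking $\eta \le m/\|\chi\|_\infty$ ensures $\psi \le u$ on $\overline{B_{r_0/2}(z_0)}$; moreover $\psi=0 \le u$ outside $B_{r_0}(z_0)$, and $L_K(u-\psi) \ge 0$ in $A$ in the weak sense. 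Applying the weak maximum principle in its version with nonnegative exterior datum (the remark after Proposition \ref{WmP}) on the domain $A$ gives $u \ge \psi$ on $A$, so at our point $u(x) \ge \eta\,c_0\,(r_0-|x-z_0|)^s = \eta\,c_0\,\delta(x)^s$. Since $r_0,\eta,c_0,m$ can be chosen uniformly in $y_0 \in \partial\Omega$ by compactness, $c:=\min\{\eta\,c_0,\; m\,(\mathrm{diam}\,\Omega)^{-s}\}$ works, and combining the two regions finishes the argument.

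I expect the main obstacle to be the construction of the radial barrier $\chi$ with the sharp power-$s$ lower bound near $\partial B_{r_0}$ while keeping $L_K\chi \le 0$ in the annulus: since $K$ is only assumed bounded below (and possibly unbounded above) and no regularity of $K$ is imposed, the pointwise evaluation of $L_K$ on the natural candidate $(r_0^2-|x|^2)_+^s$ and on suitable truncations of it is delicate, and this is precisely the step where one would lean on the sharp barrier constructions of Ros Oton and Valdinoci, see \cite{RO,ROV}. A secondary, more technical point is the rigorous justification of the pointwise strong maximum principle for a merely weak solution, which is what the interior regularity Theorem \ref{IR} is used for.
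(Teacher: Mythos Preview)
The paper does not give its own proof of Lemma~\ref{Hopf}; the result is quoted from \cite[Lemma~7.3]{RO}. The natural point of comparison is the Appendix, where the paper proves the generalised Lemma~\ref{Hopf1} (allowing a slightly negative reaction) under extra regularity hypotheses on $a$ and on $f$.

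Your outline and the Appendix argument share the same architecture---weak maximum principle, then a strong maximum principle at an interior zero, then a barrier near $\partial\Omega$ via the interior ball condition---but differ in the barrier step. You look for a radial profile $\chi$ supported in $B_{r_0}$ with $L_K\chi\le 0$ on the annulus $B_{r_0}\setminus\overline{B_{r_0/2}}$ and then compare on that annulus through the weak maximum principle. The paper instead uses the \emph{explicit} torsion function $u_R(x)=C(R^2-|x|^2)_+^s$, which satisfies $L_K u_R=1$ in $B_R$ exactly (cf.\ \cite[Lemma~5.4]{RO}), scales it to $v_m:=u_R/m$, and argues by contradiction on the whole ball: if $u\not\ge v_m$ for every $m$, one takes the interior maximum $x_m$ of $v_m-u$, evaluates $L_K u(x_m)$ pointwise, splits the integral over $B_{R/2}$ and its complement, and lets $m\to\infty$ to reach a contradiction. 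This choice dissolves what you correctly flag as the main obstacle, since the barrier is given in closed form and no annulus subsolution has to be built by hand. The price is that the comparison is carried out pointwise rather than weakly, which is precisely why the Appendix imposes $a\in L^\infty(S^{n-1})$ and H\"older/Lipschitz regularity of the reaction, feeding Theorem~\ref{IR} to obtain $u\in C^{3s}$ locally. Your route, by contrast, would need pointwise evaluation only in the strong-maximum-principle step; if $\chi$ were available, the boundary comparison would go through at the weak level. In short, your plan is sound and structurally close to the paper's, but the explicit torsion function is the simplification you are missing.
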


\noindent Furthermore, the quotient $\frac{u}{\delta^s}$ is not only bounded, but it is also H\"{o}lder continuous up to the boundary. Using the explicit solution $u_0$ and similar barriers, it is possible to show that solutions $u$ satisfy $|u| \leq C \delta^s$ in $\Omega$.  

\begin{Teo}[\cite{RO}, Theorem 7.4] \label{Rap}
	Let $s \in (0,1)$, and $u$ be any weak solution to \eqref{L}, with $g \in L^{\infty}(\Omega)$. Then,
	$$\left\|\cfrac{u}{\delta^s}\right\|_{C^{\alpha}(\overline{\Omega})} \leq C ||g||_{L^{\infty}(\overline{\Omega})}, \quad \alpha \in (0,s).$$
\end{Teo}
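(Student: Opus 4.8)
The plan is to prove the bound separately in the interior of $\Omega$ and near $\partial\Omega$, and then to patch the two estimates together. I will use as starting data the global bound $\|u\|_{C^s(\overline\Omega)}\le C\|g\|_{L^\infty(\Omega)}$ of Proposition \ref{Opt} and the two-sided boundary estimate $|u(x)|\le C\,\delta^s(x)\,\|g\|_{L^\infty(\Omega)}$ in $\Omega$, the latter obtained (as recalled just before the statement) by comparing $u$, via the weak maximum principle of Proposition \ref{WmP}, with multiples of the torsion-type function $(1-|x|^2)_+^s$ rescaled to the interior and exterior tangent balls that exist because $\partial\Omega$ is $C^{1,1}$; property \eqref{P3} guarantees that $L_K$ applied to these barriers stays bounded on the relevant annuli, so they are genuine super/subsolutions. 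Thus $u/\delta^s$ is already known to be bounded, and the real content of the theorem is its $C^\alpha$ modulus of continuity up to $\partial\Omega$.

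For the interior estimate, fix $x\in\Omega$ with $d:=\delta(x)$ small and set $v(y):=d^{-s}u(x+dy)$ for $y\in B_{1/2}$. Since $K$ is homogeneous of degree $-(n+2s)$, the operator $L_K$ is $2s$-homogeneous, so $v$ is a weak solution of $L_Kv=\tilde g$ in $B_{1/2}$ with $\tilde g(y)=d^{s}g(x+dy)$, $\|\tilde g\|_{L^\infty}\le d^{s}\|g\|_{L^\infty(\Omega)}$, and $|v|\le C\|g\|_{L^\infty(\Omega)}$ on all of $\mathbb{R}^n$ by the $\delta^s$-bound. The $L^\infty$-data form of Theorem \ref{IR} recorded after its statement then gives $\|v\|_{C^{2s}(B_{1/4})}\le C\|g\|_{L^\infty(\Omega)}$ (with $C^{1-\epsilon}$ in place of $C^{2s}$ in the borderline case $s=\tfrac12$, which still suffices); rescaling back and dividing by $\delta^s$, which is smooth and bounded below on $\{\delta\ge t\}$, yields $[u/\delta^s]_{C^\alpha(\{\delta\ge t\})}\le C(t)\|g\|_{L^\infty(\Omega)}$ for every $\alpha\in(0,\min\{2s,1\})$, with a blow-up rate of $C(t)$ as $t\to0$ that the boundary estimate will absorb.

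The heart of the matter is the oscillation decay at a boundary point $x_0\in\partial\Omega$: one must produce $Q(x_0)\in\mathbb{R}$ with $|Q(x_0)|\le C\|g\|_{L^\infty(\Omega)}$ and a radius $\rho_0>0$ independent of $x_0$ such that
$$\sup_{B_r(x_0)\cap\Omega}\Bigl|\tfrac{u}{\delta^s}-Q(x_0)\Bigr|\le C\,r^{\alpha}\,\|g\|_{L^\infty(\Omega)},\qquad 0<r<\rho_0.$$
I would argue by contradiction and compactness. Negating the bound for every $C$, extract solutions $u_j$ of \eqref{L} with $\|g_j\|_{L^\infty}\le1$, boundary points $x_j$, scales $r_j\to0$ and amplitudes $\theta_j\to0$ so that, after subtracting at scale $r_j$ the best approximating multiple of the flattened model $\delta^s$, the rescalings $w_j(y)=\theta_j^{-1}r_j^{-s}\bigl(u_j(x_j+r_jy)-\kappa_j\,\delta_j^{\,s}(x_j+r_jy)\bigr)$ have unit oscillation on $B_1$ while, by the standard iteration over dyadic annuli, obeying the growth bound $|w_j(y)|\le C(1+|y|^{s+\alpha})$. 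The $\delta^s$-bound together with the interior estimate above makes $(w_j)$ locally uniformly bounded and equicontinuous on compact subsets of the limiting half-space $H=\{(y-y_0)\cdot e>0\}$, so along a subsequence $w_j\to w_\infty$ locally uniformly, where $w_\infty$ is a weak solution of $L_Kw_\infty=0$ in $H$, vanishes in $\mathbb{R}^n\setminus H$, satisfies $|w_\infty(y)|\le C(1+|y|^{s+\alpha})$, and — by the normalization — is not a multiple of the $s$-homogeneous half-space profile $\Phi_e$ of $L_K$ (whose existence and uniqueness up to scaling come from the anisotropic half-space barriers of \cite{RO}). A Liouville-type theorem — any weak solution of $L_Kw=0$ in a half-space that vanishes outside it and has growth strictly below the order $\min\{1+s,2s\}$ is a constant multiple of $\Phi_e$ — contradicts this. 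Choosing $\alpha\in(0,s)$ guarantees $s+\alpha<2s\le 1+s$, so the Liouville hypothesis is met and all exponents stay admissible in the estimates used.

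The main obstacle is exactly this Liouville classification for the \emph{anisotropic} operator with a merely $L^1$, possibly irregular angular density $a$: in contrast with $(-\Delta)^s$ there is no explicit half-space solution, and one cannot differentiate the equation to exploit translation invariance because $K$ carries no regularity, so the whole argument has to run through soft comparison (the Hopf-type bounds of Lemma \ref{Hopf} and the two-sided $\delta^s$-estimate) and through the a-priori interior estimate of Theorem \ref{IR}, whose right-hand side is formulated precisely to tolerate non-regular kernels. Once the boundary decay is established, the final estimate $\|u/\delta^s\|_{C^\alpha(\overline\Omega)}\le C\|g\|_{L^\infty(\Omega)}$ follows by a routine covering argument: for $x,y\in\overline\Omega$ one distinguishes the cases in which both points are near $\partial\Omega$, both are well inside $\Omega$, or the situation is mixed, according to how $|x-y|$ compares with $\delta(x)$ and $\delta(y)$, and combines the boundary decay with the interior Hölder bound in each case.
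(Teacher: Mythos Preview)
The paper does not prove Theorem~\ref{Rap}: it is stated as a direct citation of \cite[Theorem~7.4]{RO} and used as a black box, with no argument supplied beyond the preceding remark that the two-sided bound $|u|\le C\delta^s$ follows from barriers. So there is no proof in the paper to compare your proposal against.

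That said, your sketch is a reasonable reconstruction of how such results are actually proved in the Ros-Oton and Ros-Oton--Serra literature: the combination of (i) a $\delta^s$ two-sided bound from barriers, (ii) interior rescaling using the $2s$-homogeneity of the kernel, and (iii) a blow-up/compactness argument at boundary points terminating in a half-space Liouville theorem is exactly the strategy behind \cite{RO} and \cite{ROV}. You have also correctly located the genuine difficulty, namely the Liouville classification for anisotropic kernels with no regularity assumption on $a$; this is indeed the nontrivial input, and in the cited references it is handled via boundary Harnack-type arguments and one-dimensional reductions rather than differentiation. One minor point: your growth threshold should simply be ``strictly less than $2s$'' (or, equivalently, strictly less than $s+1$ when $s>\tfrac12$, but the constraint $\alpha<s$ already keeps $s+\alpha<2s$), so the phrase $\min\{1+s,2s\}$ is correct but the $1+s$ branch is never active here.
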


\begin{Oss}
	The results, in \cite{RO}, hold even if $a\geq0$ in the kernel $K$.
\end{Oss}	

\noindent We observe that Hopf's lemma involves Strong maximum principle and 
we will see another general version of Hopf's lemma, where the nonlinearity is slightly negative, but this requires an higher regularity for $f$ (see the appendix). Moreover, we recall \cite[Proposition 2.5]{DI} for the fractional Laplacian analogy.

\subsection{Equivalence of minimizers in the two topologies}\label{subsec42}

In Theorem \ref{Equiv} we present an useful topological result, relating the minimizers in the $X(\Omega)$-topology and in $C_{\delta}^0({\overline{\Omega}})$-topology, respectively.
This is an anisotropic version of the result of \cite{IMS}, previously proved in \cite[Proposition 2.5]{BCSS}, which in turn is inspired from \cite{BN}. In the proof of Theorem \ref{Equiv} the critical case, i.e. $q=2_s^*$ in \eqref{G}, presents a twofold difficulty: a loss of compactness which prevents minimization of $J$, and the lack of uniform a priori estimate for the weak solutions of \eqref{P}. 

\begin{Teo} \label{Equiv}
	Let \eqref{G} hold, $J$ be defined as above, and $u_0 \in X(\Omega)$. Then, the following conditions are equivalent:\\
	
	i)  there exists $\rho>0$ such that $J(u_0 + v)\geq J(u_0)$ for all 
	$v \in X(\Omega) \cap \emph{C}_{\delta}^{0}(\overline{\Omega})$, $||v||_{0, \delta} \leq \rho$ ;\\
	
	ii) there exists $\epsilon>0$ such that $J(u_0 + v)\geq J(u_0)$ for all  
	$v \in X(\Omega)$, $||v|| \leq \epsilon$.
	
\end{Teo}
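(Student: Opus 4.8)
\noindent The plan is to record first a reduction used in both implications: if $u_0$ satisfies either i) or ii), then for every $\varphi\in C_c^\infty(\Omega)$ we have $t\varphi\in X(\Omega)\cap C_\delta^0(\overline{\Omega})$ with $\|t\varphi\|_{0,\delta}\to0$ as $t\to0$, so the scalar map $t\mapsto J(u_0+t\varphi)$ has a local minimum at $t=0$, whence $J'(u_0)[\varphi]=0$; by density of $C_c^\infty(\Omega)$ in $X(\Omega)$ this gives $J'(u_0)=0$, i.e. $u_0$ is a weak solution of \eqref{P}. Consequently, by Theorem \ref{SL}, $u_0\in L^\infty(\Omega)$, and by Theorem \ref{Rap}, $u_0/\delta^s\in C^\alpha(\overline{\Omega})$ for every $\alpha\in(0,s)$.

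\noindent For ii)$\,\Rightarrow\,$i), the elementary direction, I would argue by contradiction: if i) fails there are $v_n\in X(\Omega)\cap C_\delta^0(\overline{\Omega})$ with $\|v_n\|_{0,\delta}\to0$ and $J(u_0+v_n)<J(u_0)$. Expanding the quadratic part, using the weak formulation \eqref{Fd} for $u_0$ to write $\langle u_0,v_n\rangle_{X(\Omega)}=\int_\Omega f(x,u_0)v_n\,dx$, and writing $F(x,u_0+v_n)-F(x,u_0)=\bigl(\int_0^1 f(x,u_0+tv_n)\,dt\bigr)v_n$, one gets
\[
J(u_0+v_n)-J(u_0)=\tfrac12\|v_n\|^2+\int_\Omega\Bigl(f(x,u_0)-\int_0^1 f(x,u_0+tv_n)\,dt\Bigr)v_n\,dx.
\]
Since $\|v_n\|_{0,\delta}\to0$ forces $\|v_n\|_{L^\infty(\Omega)}\to0$, the growth bound \eqref{G}, the fact that $u_0\in L^\infty(\Omega)$, and dominated convergence make the last integral $o(1)$; together with $J(u_0+v_n)<J(u_0)$ this yields $\|v_n\|\to0$, so eventually $\|v_n\|\le\epsilon$, contradicting ii). Hence i) holds.

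\noindent The substantial implication is i)$\,\Rightarrow\,$ii). Assume i) and suppose ii) fails, so there is $v_n\to0$ in $X(\Omega)$ with $J(u_0+v_n)<J(u_0)$; set $\epsilon_n=\max\{\|v_n\|,1/n\}\to0^+$. In the subcritical case $q<2_s^*$ the compact embedding $X(\Omega)\hookrightarrow L^q(\Omega)$ makes $J$ sequentially weakly lower semicontinuous, so $J$ attains its minimum over the weakly compact ball $\overline{B}_{\epsilon_n}(u_0)$ at some $u_n=u_0+w_n$ with $\|w_n\|\le\epsilon_n$ and $J(u_n)\le J(u_0+v_n)<J(u_0)$. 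Writing the Euler--Lagrange condition with a Lagrange multiplier $\mu_n\ge0$ (equal to $0$ when the constraint is inactive) and using that $u_0$ solves \eqref{P}, one finds that $u_n$ is a weak solution of \eqref{L} with
\[
g_n=\frac{f(\cdot,u_n)+\mu_n f(\cdot,u_0)}{1+\mu_n},\qquad |g_n|\le C\bigl(1+|u_n|^{q-1}\bigr).
\]
Since $\|u_n\|\le\|u_0\|+1$, the iteration in the proof of Theorem \ref{SL} (which only uses the bound on the right-hand side) gives $\|u_n\|_{L^\infty(\Omega)}\le C$ uniformly in $n$, hence $\|g_n\|_{L^\infty(\Omega)}\le C$, and Theorem \ref{Rap} yields the uniform bound $\|u_n/\delta^s\|_{C^\alpha(\overline{\Omega})}\le C$ for $\alpha\in(0,s)$. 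By compactness of $C_\delta^\alpha(\overline{\Omega})\hookrightarrow C_\delta^0(\overline{\Omega})$ together with $u_n\to u_0$ in $L^2(\mathbb{R}^n)$, we get $u_n\to u_0$ in $C_\delta^0(\overline{\Omega})$, i.e. $\|w_n\|_{0,\delta}\to0$; for $n$ large $\|w_n\|_{0,\delta}\le\rho$, so i) forces $J(u_n)\ge J(u_0)$, contradicting $J(u_n)<J(u_0)$.

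\noindent The main obstacle is the critical case $q=2_s^*$: the embedding $X(\Omega)\hookrightarrow L^{2_s^*}(\Omega)$ is no longer compact, so $J$ need not be weakly lower semicontinuous and the constrained minimum over $\overline{B}_{\epsilon_n}(u_0)$ may fail to be attained, and Theorem \ref{SL} provides no uniform $L^\infty$ bound. Here I would follow the Brezis--Nirenberg strategy adapted to the fractional setting in \cite{IMS,BCSS}: replace the direct minimization by Ekeland's variational principle on the ball $\overline{B}_{\epsilon_n}(u_0)$ (on which $J$ is bounded below) to produce an almost-minimizing $u_n=u_0+w_n$ almost solving the perturbed equation; then split $f(\cdot,u_n)=a_n+b_n$ with $\|a_n\|_{L^\infty(\Omega)}\le A$ and $\|b_n\|_{L^{(2_s^*)'}(\Omega)}$ arbitrarily small uniformly in $n$, exploiting that $\{|u_n|^{2_s^*}\}$ is uniformly integrable because $u_n\to u_0$ in $L^{2_s^*}(\Omega)$; finally run a Brezis--Kato iteration based on \eqref{P3}, the fractional Sobolev inequality and the regularity Theorems \ref{IR} and \ref{Rap} to upgrade this to a uniform bound for $u_n$ in $L^\infty(\Omega)$, hence in $C_\delta^\alpha(\overline{\Omega})$. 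Once this uniform regularity is in hand, the argument closes as in the subcritical case, and i) contradicts $J(u_n)<J(u_0)$. The delicate points are precisely recovering compactness and establishing the uniform $L^\infty$ estimate in this critical regime.
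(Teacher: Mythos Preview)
Your treatment of ii)\,$\Rightarrow$\,i) and of the subcritical part of i)\,$\Rightarrow$\,ii) is correct and close to the paper's, with only cosmetic differences: for ii)\,$\Rightarrow$\,i) the paper shows $\limsup_n\|u_0+v_n\|^2\le\|u_0\|^2$ and uses that weak convergence plus convergence of norms yields strong convergence in a Hilbert space, rather than your explicit expansion; for i)\,$\Rightarrow$\,ii) it first reduces to $u_0=0$ via the shifted functional $\tilde J(v)=J(u_0+v)-J(u_0)$, whereas you work directly with general $u_0$ using that it is a critical point. Both variants are fine.

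The real divergence, and a genuine gap in your sketch, is in the critical case $q=2_s^*$. The paper (and \cite{IMS}, which you cite) does \emph{not} use Ekeland's principle or a Brezis--Kato splitting here: it truncates, setting $f_k(x,t)=f(x,\operatorname{sgn}(t)\min\{|t|,k\})$, so that the associated functional $J_k$ has subcritical (in fact bounded) nonlinearity. Then the constrained minimum of $J_k$ on $\overline B_{\epsilon_n}$ is attained and the minimizer solves an \emph{exact} Euler--Lagrange equation, to which Theorems~\ref{SL} and~\ref{Rap} apply directly; one closes by checking that the resulting uniform $L^\infty$ bound is independent of $k$, so that $J_k$ agrees with $J$ on the relevant functions once $k$ is large. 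Your Ekeland route, by contrast, produces only an almost-critical point, with $J'(u_n)+\mu_n(u_n-u_0)=o(1)$ in $X(\Omega)^*$; but the bootstrap in Theorem~\ref{SL} and the $C_\delta^\alpha$ estimate of Theorem~\ref{Rap} both require $u_n$ to satisfy $L_K u_n=g_n$ exactly in the weak sense, and it is not clear how a vanishing dual remainder can be carried through those iterations to yield a uniform $L^\infty$ (let alone $C_\delta^\alpha$) bound. The truncation device sidesteps this entirely by restoring compactness and furnishing a genuine equation at every step.
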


\noindent We remark that, contrary to the result of \cite{BN} in the local case $s=1$, there is no relationship between the topologies of $X(\Omega)$ and $C_{\delta}^{0}(\overline{\Omega})$.

\begin{proof}
	We define $J \in C^1(X(\Omega))$ as in the Section 3.1.\\
	We argue as in \cite[Theorem 1.1]{IMS}.\\
	\textbf{i)} $\Rightarrow$ \textbf{ii)}\\
	We suppose $u_0=0$, hence we can rewrite the hypothesis as 
	$$\inf_{u \in X(\Omega) \cap \overline{B}_{\rho}^{\delta}} J(u)=0,$$
	where $\overline{B}_{\rho}^{\delta}$ denotes the closed ball in $C_{\delta}^0(\overline{\Omega})$
	centered at $0$ with radius $\rho$.\\
	We argue by contradiction: we assume i) and that there exist sequences $(\epsilon_n)\in (0,\infty)$, $(u_n)$ in $X(\Omega)$ such that $\epsilon_n \rightarrow 0$, $||u_n|| \leq \epsilon_n$, and $J(u_n) < J(0)$ for all $n \in \mathbb{N}$.\\
	We consider two cases:
	\begin{itemize}
		\item
		If $q<2_s^*$ in \eqref{G}, by the compact embedding  $X(\Omega) \hookrightarrow L^q(\Omega)$, $J$ is sequentially weakly lower semicontinuous in $X(\Omega)$, then we may assume 
		$$J(u_n)= \inf_{\overline{B}_{\epsilon_n}^X} J <0,$$
		where $\overline{B}_{\epsilon_n}^X$ denotes the closed ball in $X(\Omega)$
		centred at $0$ with radius $\epsilon_n$.\\
		Therefore there exists a Lagrange multiplier $\mu_n \leq 0$ such that for all $v \in X(\Omega)$
		$$\left\langle J'(u_n),v\right\rangle=\mu_n \left\langle u_n,v\right\rangle_{X(\Omega)},$$
		which is equivalent to $u_n$ being a weak solution of  
		\[
		\begin{cases}
		\mathit{L}_K u = C_n f(x,u)  & \text{in $\Omega$ } \\
		u = 0 & \text{in $\mathbb{R}^n \setminus \Omega$,}
		\end{cases}
		\]
		with $C_n=(1-\mu_n)^{-1} \in (0,1]$. By Theorem \ref{SL}, $||u_n||_{\infty} \leq C$, hence by Proposition \ref{Opt} and by Theorem \ref{Rap} we have $u_n \in C_{\delta}^{\alpha}(\overline{\Omega})$ and $||u_n||_{\alpha,\delta} \leq C$.
		By the compact embedding $C_{\delta}^{0,\alpha}(\overline{\Omega}) \hookrightarrow C_{\delta}^0(\overline{\Omega})$, passing to a subsequence, $u_n \rightarrow 0$ in $C_{\delta}^0(\overline{\Omega})$, consequently for 
		$n \in \mathbb{N}$ big enough we have $||u_n||_{\delta}\leq \rho$ together with $J(u_n)<0$, a contradiction.
		\item	
		If $q=2_s^*$ in \eqref{G}, then we use a truncated functional  
		$$J_k(u)=\frac{||u||^2}{2} - \int_{\Omega} F_k(x,u(x))\,dx,$$
		with $f_k(x,t)=f(x, \text{sgn(t)} \min\{|t|,k\})$, $F_k(x,t)=\int_0^t f_k(x,\tau)\, d\tau$
		to overcome the lack of compactness and of uniform $L^{\infty}$-bound.
	\end{itemize}	
	\textbf{Case $u_0 \neq 0$.}\\
	Since $C_c^{\infty}(\Omega)$ is a dense subspace of $X(\Omega)$ (see \cite[Theorem 6]{FSV}, \cite[Theorem 2.6]{MBRS}) and 
	$J'(u_0) \in X(\Omega)^*$, 
	\begin{equation}
	\left\langle J'(u_0),v\right\rangle =0
	\label{PS}
	\end{equation}
	holds, not only for all $v \in C_c^{\infty}(\Omega)$ (in particular 
	$v \in X(\Omega) \cap C_{\delta}^0(\overline{\Omega})$), but for all $v \in X(\Omega)$, i.e., $u_0$ is a weak solution of \eqref{P}. By $L^{\infty}$- bounds, we have $u_0 \in L^{\infty}(\Omega)$, hence
	$f(.,u_0(.)) \in L^{\infty}(\Omega)$. Now Proposition \ref{Opt} and Theorem \ref{Rap} imply that $u_0 \in C_{\delta}^0(\overline{\Omega})$.
	We set for all $v \in X(\Omega)$ 
	$$\tilde{J}(v)=\frac{||v||^2}{2} - \int_{\Omega} \tilde{F}(x,v(x))\,dx,$$
	with for all $(x,t) \in \Omega \times \mathbb{R}$
	$$\tilde{F}(x,t)=F(x, u_0(x)+t)-F(x,u_0(x))-f(x,u_0(x))t.$$
	$\tilde{J} \in C^1(X(\Omega))$ and the mapping 
	$\tilde{f}: \Omega \times \mathbb{R} \rightarrow \mathbb{R}$ defined by 
	$\tilde{f}(x,t)= \partial_t \tilde{F}(x,t)$ satisfies a subcritical growth condition of the type \eqref{G}.
	Besides, by \eqref{PS}, we have for all $v \in X(\Omega)$
	$$\tilde{J}(v)=\frac{1}{2}(||u_0+v||^2 - ||u_0||^2) 
	- \int_{\Omega} (F(x,u_0+v)-F(x,u_0))\,dx = J(u_0+v) - J(u_0),$$
	in particular $\tilde{J}(0)=0$.
	The hypothesis i) thus rephrases as 
	$$\inf_{v \in X(\Omega) \cap \overline{B}_{\rho}^{\delta}} \tilde{J}(v)=0$$
	and by the previous cases, we obtain the thesis.\\
	\textbf{ii)} $\Rightarrow$ \textbf{i)} \\
	By contradiction: we assume ii) and we suppose there exists a sequence $(u_n)$ in 
	$X(\Omega)\cap C_{\delta}^{0}(\overline{\Omega})$ such that 
	$u_n \rightarrow u_0$ in $C_{\delta}^{0}(\overline{\Omega})$ and $J(u_n)<J(u_0)$.
	Then
	$$\limsup_n ||u_n||^2 \leq ||u_0||^2,$$ 
	in particular $(u_n)$ is bounded in $X(\Omega)$, so (up to a subsequence) $u_n \rightharpoonup u_0$ in $X(\Omega)$, hence, by \cite[Proposition 3.32]{B}, $u_n \rightarrow u_0$ in $X(\Omega)$.
	For $n\in \mathbb{N}$ big enough we have $||u_n-u_0||\leq \epsilon$, a contradiction.
\end{proof}

\subsection{An eigenvalue problem}\label{subsec43}
We consider the following eigenvalue problem
\begin{equation}
\begin{cases}
\mathit{L_K} u = \lambda u & \text{in $\Omega$ } \\
u = 0 & \text{in $\mathbb{R}^n \setminus \Omega$.}
\end{cases}
\label{EP}
\end{equation}
\noindent We recall that $\lambda \in \mathbb{R}$ is an \emph{eigenvalue} of $L_K$ provided there exists a nontrivial solution $u \in X(\Omega)$ of problem \eqref{EP}, and, in this case, any solution will be called an \emph{eigenfunction} corresponding to the eigenvalue $\lambda$.
\begin{Pro} 
	The set of the eigenvalues of problem \eqref{EP} consists of a sequence $\{\lambda_k \}_{k \in \mathbb{N}}$ with
	$$ 0 < \lambda_1 < \lambda_2 \leq \cdots \leq \lambda_k \leq \lambda_{k+1} \leq \cdots \quad \text{and} \quad \lambda_k \rightarrow + \infty \quad \text{as} \quad k \rightarrow + \infty,$$
	with associated eigenfunctions $e_1, e_2, \cdots, e_k, e_{k+1}, \cdots$
	such that
	\begin{itemize}
		\item 
		the eigenvalues can be characterized as follows:
		\begin{align}
		\lambda_1 & = \min_{u \in X(\Omega), \quad ||u||_{L^2(\Omega)=1}} 
		\int_{\mathbb{R}^{2n}} |u(x)-u(y)|^2 K(x-y)\,dxdy, \label{A1} \\
		\lambda_{k+1} & = \min_{u \in \mathbb{P}_{k+1},\quad ||u||_{L^2(\Omega)=1}} 
		\int_{\mathbb{R}^{2n}} |u(x)-u(y)|^2 K(x-y)\,dxdy \quad \forall 	k \in \mathbb{N}, \label{Ak}
		\end{align}
		dove $\mathbb{P}_{k+1}:= \{u \in X(\Omega) \; \mathrm{s.t.} \; \left\langle u,e_j \right\rangle_{X(\Omega)}= 0 \; \forall j=1, \cdots, k\};$
		\item
		there exists a positive function $e_1 \in X(\Omega)$, which is an eigenfunction corresponding to $\lambda_1$, attaining the minimum in \eqref{A1}, that is $||e_1||_{L^2(\Omega)}=1$; moreover, for any $k \in \mathbb{N}$ there exists a nodal function $e_{k+1} \in \mathbb{P}_{k+1}$, which is an eigenfunction corresponding to $\lambda_{k+1}$, attaining the minimum in \eqref{Ak}, that is 
		$||e_{k+1}||_{L^2(\Omega)}=1$;
		\item
		$\lambda_1$ is simple, namely the eigenfunctions $u \in X(\Omega)$ corresponding to $\lambda_1$ are $u=\zeta e_1$, with  $\zeta \in \mathbb{R}$;
		\item
		the sequence $\{e_k\}_{k \in \mathbb{N}}$ of eigenfunctions corresponding to $\lambda_k$ is an orthonormal basis of $L^2(\Omega)$ and an orthogonal basis of $X(\Omega)$;
		\item
		each eigenvalue $\lambda_k$ has finite multiplicity, more precisely, if $\lambda_k$ is such that 
		$$\lambda_{k-1} < \lambda_k = \cdots = \lambda_{k+h} < \lambda_{k+h+1}$$
		for some $h \in \mathbb{N}_0$, then the set of all the eigenfunctions corresponding to $\lambda_k$ agrees with 
		$$\emph{span}\{e_k, \ldots, e_{k+h}\}.$$
	\end{itemize}
\end{Pro}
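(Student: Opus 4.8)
The plan is to follow the classical spectral-theoretic argument for compact self-adjoint operators, adapted to the space $X(\Omega)$ exactly as Servadei and Valdinoci do for general $L_K$ (and as is done for $(-\Delta)^s$). First I would observe that $X(\Omega)$ is a Hilbert space with inner product $\langle\cdot,\cdot\rangle_{X(\Omega)}$, and that problem \eqref{EP} is equivalent, via \eqref{Fd} with $f(x,u)=\lambda u$, to seeking $u\in X(\Omega)$ with $\langle u,v\rangle_{X(\Omega)}=\lambda\int_\Omega uv\,dx$ for all $v\in X(\Omega)$. Define the bounded linear operator $T:X(\Omega)\to X(\Omega)$ by $\langle Tu,v\rangle_{X(\Omega)}=\int_\Omega uv\,dx$; this is well defined by Riesz representation since $v\mapsto\int_\Omega uv\,dx$ is continuous on $X(\Omega)$ (using $X(\Omega)\hookrightarrow L^2(\Omega)$), it is self-adjoint and positive, and it is \emph{compact} because the embedding $X(\Omega)\hookrightarrow L^2(\Omega)$ is compact (this is where the hypothesis $\inf_{S^{n-1}}a>0$, i.e.\ \eqref{P3}, enters, giving $X(\Omega)\hookrightarrow H^s_0(\Omega)$ and hence the Rellich-type compactness). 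Then $\lambda$ is an eigenvalue of \eqref{EP} iff $1/\lambda$ is an eigenvalue of $T$; note $T$ is injective, so $0$ is not an eigenvalue of $T$.

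Next I would invoke the spectral theorem for compact self-adjoint positive operators on a separable Hilbert space: the nonzero eigenvalues of $T$ form a sequence $\mu_1\geq\mu_2\geq\cdots>0$ (with finite multiplicities, accumulating only at $0$), and there is an orthonormal (in $X(\Omega)$) basis of eigenfunctions. Setting $\lambda_k=1/\mu_k$ gives the increasing sequence $0<\lambda_1\leq\lambda_2\leq\cdots\to+\infty$. The variational characterizations \eqref{A1}, \eqref{Ak} follow from the Courant--Fischer min-max principle for $T$ combined with the identity $\|u\|^2_{X(\Omega)}=\langle u,u\rangle_{X(\Omega)}$ and the constraint $\|u\|_{L^2(\Omega)}=1$: minimizing $\|u\|^2$ over $u$ with $\|u\|_{L^2}=1$ yields $\lambda_1$ and a minimizer $e_1$; minimizing over the codimension-$k$ subspace $\mathbb{P}_{k+1}$ orthogonal (in $X(\Omega)$) to $e_1,\dots,e_k$ yields $\lambda_{k+1}$ and a minimizer $e_{k+1}$. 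The existence of minimizers is obtained by the direct method: a minimizing sequence is bounded in $X(\Omega)$, hence has a weakly convergent subsequence whose limit, by weak lower semicontinuity of the norm and the compact embedding into $L^2(\Omega)$ (which passes the $L^2$-normalization to the limit), realizes the infimum. That $e_{k+1}$ is nodal (sign-changing) for $k\geq1$ follows since it is $L^2$-orthogonal to the positive function $e_1$.

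For the simplicity and positivity of $\lambda_1$, I would argue as follows. First, given any eigenfunction $u$ for $\lambda_1$, the strict Poincaré-type inequality together with the fact that $|u|$ is an admissible competitor with $\big[\,|u|\,\big]_K\leq[u]_K$ (the inequality $||u(x)|-|u(y)||\leq|u(x)-u(y)|$ and $K>0$) shows $|u|$ is also a minimizer, hence an eigenfunction; then $|u|\geq0$ solves $L_K|u|=\lambda_1|u|\geq0$, so by the weak maximum principle Proposition~\ref{WmP} and Hopf's lemma Lemma~\ref{Hopf} (with $g=\lambda_1|u|\geq0$) we get $|u|\geq c\delta^s>0$ in $\Omega$, so $u$ cannot change sign. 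If $u_1,u_2$ were two linearly independent eigenfunctions for $\lambda_1$, a suitable nontrivial combination $u_1-tu_2$ would be an eigenfunction vanishing somewhere in $\Omega$, contradicting the strict positivity just established; hence $\lambda_1$ is simple, $\lambda_1<\lambda_2$, and we may take $e_1>0$. Finally, that $\{e_k\}$ is an orthonormal basis of $L^2(\Omega)$ and an orthogonal basis of $X(\Omega)$ is part of the spectral theorem applied to $T$ (orthonormality in $X(\Omega)$ can be renormalized to orthonormality in $L^2$ using $\langle e_i,e_j\rangle_{X(\Omega)}=\lambda_i\int_\Omega e_ie_j$), and the finite-multiplicity/span statement is precisely the finite dimensionality of each eigenspace of the compact operator $T$.

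The main obstacle I anticipate is not the abstract spectral machinery—which is standard once compactness of $X(\Omega)\hookrightarrow L^2(\Omega)$ is in hand—but rather the strict positivity argument underlying simplicity of $\lambda_1$: one must be careful that Hopf's lemma (Lemma~\ref{Hopf}) and the maximum principle (Proposition~\ref{WmP}) apply, which requires knowing that a solution of $L_K u=\lambda_1 u$ with $u\in X(\Omega)\cap L^\infty(\Omega)$ (the $L^\infty$ bound coming from Theorem~\ref{SL} applied to the linear right-hand side $g=\lambda_1 u$) is a bona fide weak solution of the linear problem \eqref{L} with $g\geq0$, and that $|u|\in X(\Omega)$ with $[\,|u|\,]_K\leq[u]_K$. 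These are routine but must be stated; everything else follows the template in \cite{SV1} and \cite{SV2} verbatim.
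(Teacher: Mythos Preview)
Your proposal is correct and matches the paper's approach: the paper does not give a self-contained proof but refers to \cite{SV2} for the spectral construction (direct minimization/compact self-adjoint operator argument, exactly as you outline), and then notes in a remark that the specific anisotropic kernel allows one to upgrade nonnegativity of $e_1$ to strict positivity (indeed $e_1\in\mathrm{int}(C_\delta^0(\overline\Omega)_+)$) via Lemma~\ref{Hopf} and Theorem~\ref{Rap}, which is precisely your argument for simplicity of $\lambda_1$. The only point you anticipate as an obstacle---that Hopf's lemma applies once Theorem~\ref{SL} gives $u\in L^\infty$---is exactly the extra ingredient the paper singles out.
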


\begin{Oss}
	The proof of this result can be found in \cite{SV2} with the following differences, due to the kind of kernel considered.
    For $L_K$ with a general kernel $K$, satisfying \eqref{P2}-\eqref{P3}-\eqref{P4}, the first eigenfunction $e_1$ is non-negative and every eigenfunction is bounded, there aren't any better regularity results \cite{SV2}. While for this particular kernel $K(y)= a(\frac{y}{|y|})\frac{1}{|y|^{n+2s}}$ we stress that the first eigenfunction is positive and all eigenfunctions belong to $C^s(\overline{\Omega})$, like in the case of fractional Laplacian. More precisely, $u_1 \in \mathrm{int}(C_{\delta}^{0}(\overline{\Omega})_{+})$, by applying Lemma \ref{Hopf} and Theorem \ref{Rap}.
\end{Oss}	

\section{Application: a multiplicity result }\label{sec5}

\noindent
In this section we present an existence and multiplicity result for the solution of problem \eqref{P}, under condition \eqref{G} plus some further conditions; in the proof Theorem \ref{Equiv} will play an essential part.
This application is an extension to the anisotropic case of a result on the fractional Laplacian  
\cite[Theorem 5.2] {IMS}. By a truncation argument and minimization, we show the existence of two constant sign solutions, then we apply Morse theory to find a third nontrivial solution.

\begin{Teo} \label{MR}
	Let $f: \Omega \times \mathbb{R} \rightarrow \mathbb{R}$ be a Carath\'{e}odory function satisfying \\
	
	i)   $|f(x,t)|\leq a(1+|t|^{q-1})$ a.e. in $\Omega$ and for all $t \in \mathbb{R}$ $(a>0, 1<q<2_s^{*})$;\\
	
	ii)  $f(x,t)t \geq 0$ a.e. in $\Omega$  and for all $t \in \mathbb{R}$;\\
	
	iii) $\lim_{t \to 0} \frac{f(x,t)-b |t|^{r-2} t}{t}=0$ uniformly a.e. in $\Omega$ $(b>0, 1<r<2)$;\\
	
	iv)  $\limsup_{|t| \to \infty} \frac{2 F(x,t)}{t^2} < \lambda_1$ uniformly a.e. in $\Omega$.\\
	
	\noindent Then problem \eqref{P} admits at least three non-zero solutions 
	$u^{\pm} \in \pm \ \mathrm{int}(C_{\delta}^0(\overline{\Omega})_+)$, $\tilde{u} \in C_{\delta}^0(\overline{\Omega})\setminus \{0\}$.
\end{Teo}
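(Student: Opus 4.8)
The plan is to reproduce the scheme of \cite{IMS} for the fractional Laplacian, feeding in the anisotropic tools established above. Throughout one may assume $K_J$ is finite, since otherwise \eqref{P} has infinitely many solutions and there is nothing to prove; note also that (iii) forces $f(x,0)=0$ a.e., so $0\in K_J$. First I would produce two constant-sign solutions by truncation and minimization. Let $f_+(x,t):=f(x,t)$ for $t\geq0$ and $f_+(x,t):=0$ for $t<0$, and define $f_-$ symmetrically; let $F_\pm\geq0$ be their primitives and set $J_\pm(u)=\tfrac12\|u\|^2-\int_\Omega F_\pm(x,u)\,dx\in C^1(X(\Omega))$. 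By (iv), $\limsup_{|t|\to\infty}2F_\pm(x,t)/t^2<\lambda_1$, so $J_\pm$ are coercive via \eqref{A1}; since (i) with $q<2_s^*$ makes $u\mapsto\int_\Omega F_\pm(x,u)$ weakly continuous (compactness of $X(\Omega)\hookrightarrow L^q(\Omega)$), $J_\pm$ are sequentially weakly lower semicontinuous and attain global minima at some $u^\pm\in X(\Omega)$. Testing the equation $J_+'(u^+)=0$ with the negative part of $u^+$ and using the monotonicity of the nonlocal bilinear form gives $u^+\geq0$, and symmetrically $u^-\leq0$; hence each $u^\pm$ is a weak solution of \eqref{P}, because $f_\pm(x,u^\pm)=f(x,u^\pm)$ on the set where $u^\pm$ has the appropriate sign. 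Finally, evaluating $J_\pm$ along $\pm te_1$ and using (iii) with $1<r<2$ together with $e_1\in L^\infty(\Omega)$ shows $J_\pm(\pm te_1)\leq\tfrac{\lambda_1}{2}t^2-\tfrac{b}{2r}t^r\|e_1\|_r^r<0$ for $t>0$ small, so $\inf J_\pm<0=J_\pm(0)$ and $u^\pm\neq0$.

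Next I would run the bootstrap to regularity and strict positivity. By Theorem \ref{SL}, $u^\pm\in L^\infty(\Omega)$, so $g_\pm:=f(\cdot,u^\pm(\cdot))\in L^\infty(\Omega)$, and (ii) gives $g_+\geq0\geq g_-$; then Proposition \ref{Opt} and Theorem \ref{Rap} give $u^\pm\in C_{\delta}^0(\overline{\Omega})$, and Hopf's Lemma \ref{Hopf} applied to $u^+$ and to $-u^-$, combined with $u^\pm\neq0$, yields $u^+\in\mathrm{int}(C_{\delta}^0(\overline{\Omega})_{+})$ and $u^-\in-\mathrm{int}(C_{\delta}^0(\overline{\Omega})_{+})$ (in particular $u^+\neq u^-$). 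Now comes the point where Theorem \ref{Equiv} is essential: if $v\in X(\Omega)\cap C_{\delta}^0(\overline{\Omega})$ with $\|v\|_{0,\delta}$ small, then $u^++v\geq0$ in $\Omega$ (since $u^+/\delta^s$ is bounded below by a positive constant), whence $J(u^++v)=J_+(u^++v)\geq J_+(u^+)=J(u^+)$. Thus $u^+$ is a local minimizer of $J$ in the $C_{\delta}^0(\overline{\Omega})$-topology, and Theorem \ref{Equiv} upgrades it to a local minimizer in the $X(\Omega)$-topology; by Proposition \ref{M}, $C_k(J,u^+)=\delta_{k,0}\mathbb{R}$ for all $k$, and likewise $C_k(J,u^-)=\delta_{k,0}\mathbb{R}$.

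The third solution I would extract from Morse theory. Condition (iv) together with (i) makes $J$ coercive, bounded below and compliant with the (C)-condition, so taking $a<\inf J$ gives $J^a=\emptyset$ and $C_k(J,\infty)=H_k(X(\Omega))=\delta_{k,0}\mathbb{R}$. Because $f$ is $(r-1)$-sublinear at the origin with $1<r<2$ (condition (iii)), $0$ is not a local minimizer of $J$ and $C_k(J,0)=0$ for all $k\in\mathbb{N}_0$, exactly as in \cite{IMS}. If there were no further critical point, i.e.\ $K_J=\{0,u^+,u^-\}$, the Morse identity of Proposition \ref{MI} would read $2=1+(1+t)Q(t)$ for all $t$, forcing $(1+t)Q(t)\equiv1$, which is impossible for a power series $Q$ with coefficients in $\mathbb{N}_0$. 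Hence there exists $\tilde u\in K_J\setminus\{0,u^+,u^-\}$; it is a nontrivial weak solution of \eqref{P}, and Theorem \ref{SL}, Proposition \ref{Opt} and Theorem \ref{Rap} place it in $C_{\delta}^0(\overline{\Omega})\setminus\{0\}$. (Equivalently, since $u^\pm$ are local minimizers of $J$, one could instead invoke the mountain pass Theorem \ref{MPT} between $u^+$ and $u^-$ and Proposition \ref{Gcr} to obtain $\tilde u$ of mountain pass type with $C_1(J,\tilde u)\neq0$, which again separates $\tilde u$ from $0,u^+,u^-$.)

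The hard part is not the abstract Morse-theoretic bookkeeping but making sure the anisotropic analytic inputs are strong enough: the optimal boundary H\"older regularity for $L_K$ (Proposition \ref{Opt}, Theorem \ref{Rap}), Hopf's Lemma \ref{Hopf} (which needs $g\geq0$, supplied by (ii)), the compactness of $X(\Omega)\hookrightarrow L^q(\Omega)$ for $q<2_s^*$, and, above all, the equivalence of minimizers (Theorem \ref{Equiv}) --- this last is the step that turns the $C_{\delta}^0$-minimality coming from the truncated problems into genuine $X(\Omega)$-minimality and thereby feeds it into the Morse identity. The one genuinely delicate point internal to the proof is the identity $C_k(J,0)=0$: it is produced by the concave term near zero and relies essentially on $1<r<2$, and it must be verified for $L_K$ just as for $(-\Delta)^s$.
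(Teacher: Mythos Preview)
Your proposal is correct and follows essentially the same scheme as the paper: truncation and minimization for the two constant-sign solutions, regularity plus Hopf to land them in $\pm\,\mathrm{int}(C_\delta^0(\overline\Omega)_+)$, Theorem~\ref{Equiv} to upgrade $C_\delta^0$-minimality to $X(\Omega)$-minimality, and then Morse theory for the third solution. The only cosmetic difference is that the paper leads with the mountain pass argument (Theorem~\ref{MPT} plus Proposition~\ref{Gcr} to get $C_1(J,\tilde u)\neq 0$, contrasted with $C_k(J,0)=0$) and relegates the Morse identity route to a remark, whereas you do the reverse; the paper also spells out the deformation-retract computation of $C_k(J,0)=0$ in detail rather than citing \cite{IMS}.
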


\begin{Ese}
	As a model for $f$ we can take the function 
	\[
	f(t):=
	\begin{cases}
	b |t|^{r-2} t + a_1 |t|^{q-2} t, & \text{if } |t| \leq 1, \\
	\beta_1 t, & \text{if } |t|>1,
	\end{cases}
	\]
	with $1<r<2<q<2_s^*$, $a_1, b >0$, $\beta_1 \in (0,\lambda_1)$ s.t. $a_1 + b = \beta_1$.	
\end{Ese}	

\begin{proof}[Proof of Theorem \ref{MR}]
	We define $J \in C^1(X(\Omega))$ as
	$$J(u)=\frac{||u||^2}{2} - \int_{\Omega} F(x,u(x))\,dx.$$
	Without loss of generality, we assume $q>2$ and $\epsilon, \epsilon_1, b_1, a_1, a_2$ are positive constants.\\
	From ii) we have immediately that $0 \in K_J$, but from iii) $0$ is not a local minimizer.
	Indeed, let be $\delta >t>0$, by iii) we have $$\frac{f(x,t)-bt^{r-1}}{t} \geq - \epsilon,$$ by integrating $F(x,t) \geq b_1 t^r - \epsilon_1 t^2$ $(\epsilon_1 < b_1)$, but by i) $F(x,t) \geq -a_1 t-a_2 t^q$, hence, in the end, we obtain a.e. in $\Omega$ and for all $t \in \mathbb{R}$ 
	\begin{equation}
	F(x,t) \geq c_0 |t|^r - c_1 |t|^q \quad (c_0,c_1 >0).
	\label{VA}
	\end{equation}
	We consider a function $u \in X(\Omega)$, $u(x)>0$ a.e. in $\Omega$, for all $\tau >0$ we have 
	$$J(\tau u)= \frac{\tau^2 ||u||^2}{2} - \int_{\Omega} F(x,\tau u)\,dx 
	\leq   \frac{\tau^2 ||u||^2}{2} - c_0 \tau^r ||u||_{L^r(\Omega)}^r + c_1 \tau^q ||u||_{L^q(\Omega)}^q,$$
	and the latter is negative for $\tau >0$ close enough to $0$, therefore, $0$ is not a local minimizer of $J$.
	\noindent We define two truncated energy functionals 
	$$J_{\pm}(u):=\frac{||u||^2}{2} - \int_{\Omega} F_{\pm}(x,u)\,dx \quad 
	\forall u \in X(\Omega),$$
	setting for all $(x,t) \in \Omega \times \mathbb{R}$ 
	$$f_{\pm}(x,t)=f(x,\pm t_{\pm}), \; F_{\pm}(x,t)=\int_0^t f_{\pm}(x,\tau)\, d\tau, \; t_{\pm}=\max\{\pm t, 0\} \; \forall t \in \mathbb{R}.$$
	In a similar way, by \eqref{VA}, we obtain that $0$ is not a local minimizer for the truncated functionals $J_{\pm}$.\\
	\noindent We focus on the functional $J_+$, clearly $J_+ \in C^1(X(\Omega))$ and $f_+$ satisfies \eqref{G}. We now prove that $J_+$ is coercive in $X(\Omega)$, i.e.,
	$$\lim_{||u|| \to \infty} J_+(u)=\infty.$$
	Indeed, by iv), for all $\epsilon >0$ small enough, we have a.e. in $\Omega$ and for all 
	$t \in \mathbb{R}$
	$$F_+(x,t) \leq \frac{\lambda_1 - \epsilon}{2} t^2 + C.$$
	By the definition of $\lambda_1$, we have for all $u \in X(\Omega)$ 
	$$J_+(u)\geq \frac{||u||^2}{2} - \frac{\lambda_1 - \epsilon}{2} ||u||_{L^2(\Omega)}^2 - C
	\geq \frac{\epsilon}{2 \lambda_1} ||u||^2 - C,$$
	and the latter goes to $\infty$ as $||u||\rightarrow \infty$. Consequently, $J_+$ is coercive in $X(\Omega)$.\\
	Moreover, $J_+$ is sequentially weakly lower semicontinuous in $X(\Omega)$.
	Indeed, let $u_n \rightharpoonup u$ in $X(\Omega)$, passing to a subsequence, we may assume $u_n \rightarrow u$ in $L^q(\Omega)$ and $u_n(x) \rightarrow u(x)$ for a.e. 
	$x \in \Omega$, moreover, there exists $g \in L^q(\Omega)$ such that $|u_n(x)| \leq g(x)$ for a.e. $x \in \Omega$ and all $n \in \mathbb{N}$ \cite[Theorem 4.9]{B}. Hence,
	$$\lim_n \int_{\Omega} F_+(x,u_n)\,dx = \int_{\Omega} F_+(x,u)\,dx.$$
	Besides, by convexity we have
	$$\liminf_n  \frac{||u_n||^2}{2} \geq \frac{||u||^2}{2},$$
	as a result
	$$ \liminf_n J_+(u_n) \geq J_+(u).$$
	
	\noindent Thus, there exists $u^+ \in X(\Omega) \setminus \{0\}$ such that 
	$$J_+(u^+)=\inf_{u \in X(\Omega)} J_+(u).$$

	\noindent By Proposition \ref{WmP} and by ii) we have that $u^+$ is a nonnegative weak solution to \eqref{P}.
	By Theorem \ref{SL}, we obtain $u^+ \in L^{\infty}(\Omega)$, hence by Proposition \ref{Opt} and Theorem \ref{Rap} we deduce $u^+ \in C_{\delta}^0(\overline{\Omega})$.
	Furthermore, by Hopf's lemma $\frac{u^+}{\delta^s}>0$ in $\overline{\Omega}$, and by 
	\cite[Lemma 5.1]{ILPS} $u^+ \in \mathrm{int}(C_{\delta}^0(\overline{\Omega})_+)$. \\
	Let $\rho >0$ be such that $B_{\rho}^{\delta}(u^+) \subset C_{\delta}^0(\overline{\Omega})_+$, 
	$u^+ +v \in B_{\rho}^{\delta}(u^+)$, $\forall v \in C_{\delta}^0(\overline{\Omega})$ con 
	$||v||_{0,\delta}\leq \rho$, since $J$ and $J_+$ agree on $C_{\delta}^0(\overline{\Omega})_+ \cap X(\Omega)$,
	$$J(u^+ +v) \geq J(u^+),  \qquad v \in B_{\rho}(0) \cap X(\Omega)$$
	and by Theorem \ref{Equiv}, $u^+$ is a strictly positive local minimizer for $J$ in $X(\Omega)$.
	Similarly, looking at $J_-$, we can detect another strictly negative local minimizer 
	$u^- \in -  \mathrm{int}(C_{\delta}^0(\overline{\Omega})_+)$ of $J$.
	Now, by Theorem \ref{MPT} there exists, besides the two points of minimum, a third critical point $\tilde{u}$, such point is of mountain pass type. We only have to show that $\tilde{u} \neq 0$, 
	to do this we use a Morse-theoretic argument.
	First of all, we prove that $J$ satisfies Cerami condition (which in this case is equivalent to the Palais-Smale condition) to apply Morse theory.\\
	Let $(u_n)$ be a sequence in $X(\Omega)$ such that $|J(u_n)| \leq C$ for all $n \in \mathbb{N}$ and $(1+||u_n||)J'(u_n)\rightarrow 0$ in $X(\Omega)^*$. Since $J$ is coercive, the sequence $(u_n)$ is bounded in $X(\Omega)$, hence, passing to a subsequence, we may assume 
	$u_n \rightharpoonup u$ in $X(\Omega)$, $u_n \rightarrow u$ in $L^q(\Omega)$ and $L^1(\Omega)$, and $u_n(x) \rightarrow u(x)$ for a.e. $x \in \Omega$, with some $u \in X(\Omega)$. Moreover, by 
	\cite[Theorem 4.9]{B} there exists $g \in L^q(\Omega)$ such that $|u_n(x)| \leq g(x)$ for all $n \in \mathbb{N}$ and a.e. $x \in \Omega$. Using such relations along with i), we obtain
	\begin{align*}
	||u_n-u||^2 & =\left\langle u_n, u_n-u\right\rangle_{X(\Omega)} - \left\langle u, u_n-u\right\rangle_{X(\Omega)} \\
	& =J'(u_n)(u_n-u) + \int_{\Omega} f(x,u_n)(u_n-u)\,dx - \left\langle u, u_n-u\right\rangle _{X(\Omega)}\\
	& \leq ||J'(u_n)||_* ||u_n-u||+  \int_{\Omega} a (1+|u_n|^{q-1})|u_n-u|\,dx 
	- \left\langle u, u_n-u\right\rangle_{X(\Omega)} \\
	& \leq ||J'(u_n)||_* ||u_n-u|| +  a (||u_n-u||_{L^1(\Omega)}+||u_n||_{L^q(\Omega)}^{q-1} ||u_n-u||_{L^q(\Omega)})
	- \left\langle u, u_n-u\right\rangle_{X(\Omega)}
	\end{align*}
	for all $n \in \mathbb{N}$ and the latter tends to $0$ as $n\rightarrow \infty$.
	Thus, $u_n\rightarrow u$ in $X(\Omega)$.\\
	Without loss of generality, we assume that $0$ is an isolated critical point, therefore we can determine the corresponding critical group.\\
	\textbf{Claim:} $C_k(J,0)=0 \quad \forall k \in \mathbb{N}_0$.\\
	By iii), we have 
	$$\lim_{t \to 0} \frac{r F(x,t)-f(x,t)t}{t^2}=0,$$
	hence, for all $\epsilon >0$ we can find $C_\epsilon >0$ such that a.e. in $\Omega$ and for all 
	$t \in \mathbb{R}$
	$$\left| F(x,t) - \frac{f(x,t) t}{r}\right| \leq \epsilon t^2 + C_\epsilon |t|^q.$$
	By the relations above we obtain
	$$\int_{\Omega} \left(F(x,u) - \frac{f(x,u) u}{r} \right)\,dx= o(||u||^2) \qquad \text{as } ||u||\rightarrow 0.$$
	For all $u \in X(\Omega) \setminus \{0\}$ such that $J(u)>0$ we have 
	$$\frac{1}{r} \frac{d}{d\tau} J(\tau u)|_{\tau =1}= \frac{||u||^2}{r} - 
	\int_{\Omega}  \frac{f(x,u) u}{r}\,dx= J(u)+ \left(\frac{1}{r}-\frac{1}{2}\right) ||u||^2
	+o(||u||^2) \qquad \text{as } ||u||\rightarrow 0.$$
	Therefore we can find some $\rho >0$ such that, for all $u \in B_{\rho}(0)\setminus \{0\}$ with $J(u) > 0$,
	\begin{equation}
	\frac{d}{d\tau} J(\tau u)|_{\tau =1}>0.
	\label{De}
	\end{equation}
	Using again \eqref{VA}, there exists $\tau(u) \in (0,1)$ such that $J(\tau u) <0$ for all $0<\tau<\tau(u)$ and $J(\tau(u) u)=0$.
	This assures uniqueness of $\tau(u)$ defined as above, for all $u \in B_\rho(0)$ with $J(u)>0$.
	We set $\tau(u)=1$ for all $u \in B_\rho(0)$ with $J(u)\leq0$, hence we have defined a map 
	$\tau: B_\rho (0)\rightarrow (0,1]$ such that for $\tau \in (0,1)$ and for all $u \in B_\rho (0)$ we have
	\[
	\begin{cases}
	J(\tau u)<0 & \text{if $\tau<\tau(u)$} \\
	J(\tau u)=0 & \text{if $\tau=\tau(u)$}\\
	J(\tau u)>0 & \text{if $\tau>\tau(u).$}\\
	\end{cases}
	\]

	\noindent By \eqref{De} and the Implicit Function Theorem, $\tau$ turns out to be continuous. We set for all  
	$(t,u) \in [0,1]\times B_\rho (0)$ 
	$$h(t,u)=(1-t)u+t \tau(u)u,$$
	hence $h: [0,1] \times B_\rho (0)\rightarrow  B_\rho (0)$ is a continuous deformation and the set $B_\rho(0) \cap J^0 = \{\tau u: u \in B_{\rho}(0), \tau \in [0,\tau(u)]\}$ is a deformation retract of $B_\rho(0)$.
	Similarly we deduce that the set $B_\rho (0) \cap J^0 \setminus \{0\}$ is a deformation retract of $B_\rho(0) \setminus \{0\}$.
	Consequently, we have 
	$$C_k(J,0)=H_k(J^0 \cap B_\rho (0), J^0 \cap B_\rho (0) \setminus \{0\})=
	H_k(B_\rho (0), B_\rho (0) \setminus \{0\})=0 \quad \forall k \in \mathbb{N}_0,$$
	the last passage following from contractibility of $B_\rho(0)\setminus\{0\}$, recalling that $\mathrm{dim}(X(\Omega))=\infty$.\\
	Since by Proposition \ref{Gcr} $C_1(J,\tilde{u})\neq 0$ and $C_k(J,0)=0$ $\forall k \in \mathbb{N}_0$, 
	then $\tilde{u}$ is a non-zero solution.
\end{proof}

\begin{Oss}
	We remark that we can use Morse identity (Proposition \ref{MI}) to conclude the proof.
	Indeed, we note that $J({u_\pm})< J(0)=0$, in particular $0$ and $u_{\pm}$ are isolated critical points, hence we can compute the corresponding critical groups. By Proposition \ref{M}, since $u_{\pm}$ are strict local minimizers of $J$, we have $C_k(J,u_{\pm})=\delta_{k,0} \mathbb{R}$ for all 
	$k \in \mathbb{N}_0$.
	We have already determined $C_k(J,0)=0$ for all $k \in \mathbb{N}_0$, and we already know 
	the k-th critical group at infinity of $J$. Since $J$ is coercive and sequentially weakly lower semicontinuous, $J$ is bounded below in $X(\Omega)$, then, by \cite[Proposition 6.64 (a)]{MMP},
	$C_k(J,\infty)=\delta_{k,0} \mathbb{R}$ for all $k \in \mathbb{N}_0$.
	Applying Morse identity and choosing, for instance, $t=-1$, we obtain a contradiction, therefore there exists another critical point $\tilde{u} \in K_J \setminus \{0, u_{\pm}\}$. \\
	But in this way we lose the information that $\tilde{u}$ is of mountain pass type.
\end{Oss}

\section{Appendix: General Hopf's lemma}\label{sec6}

\noindent
As stated before, we show that weak and strong maximum principle, Hopf's lemma can be generalized to the case in which the sign of $f$ is unknown. 
Now we focus on the following problem
\begin{equation}
\begin{cases}
\mathit{L_K} u = f(x,u) & \text{in $\Omega$ } \\
u = h & \text{in $\mathbb{R}^n \setminus \Omega$,}
\end{cases}
\label{DNO}
\end{equation}
where $h \in C^s(\mathbb{R}^n \setminus \Omega)$, and we have the same assumptions on the function $f$, in addition we assume
\begin{equation}
f(x,t) \geq -ct \quad \forall (x,t) \in \overline{\Omega} \times \mathbb{R_{+}} \quad (c>0).
\label{SF}
\end{equation}

\begin{Oss}
	Since Dirichlet data is not homogeneous in \eqref{DNO}, the energy functional associated to the problem \eqref{DNO} is 
	\begin{equation}
	J(u)=\frac{1}{2} \int_{\mathbb{R}^{2n} \setminus \mathcal{O}} |u(x)-u(y)|^2 K(x-y)\,dxdy - \int_{\Omega} F(x,u(x))\,dx,
	\label{NH}
	\end{equation}
	for all $u \in \tilde{X}:=\{u \in L^2(\mathbb{R}^n): [u]_K< \infty\}$ with $u=h$ a.e. in 
	$\mathbb{R}^n \setminus \Omega$, where $\mathcal{O}= (\mathbb{R}^n \setminus \Omega) \times (\mathbb{R}^n \setminus \Omega)$.
	When $h$ is not zero, the term $ \int_{\mathcal{O}} |h(x)-h(y)|^2 K(x-y)\,dxdy$ could be infinite, this is the reason why one has to take  \eqref{NH}, see \cite{RO}.
\end{Oss}
\noindent We begin with a weak maximum principle for \eqref{DNO}.
\begin{Pro}[Weak maximum principle] 
	Let \eqref{SF} hold and let $u$ be a weak solution of \eqref{DNO} with $h \geq 0$ in 
	$\mathbb{R}^n \setminus \Omega$. Then, $u \geq 0$ in $\Omega$.
\end{Pro}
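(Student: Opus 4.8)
The plan is to test the equation against the negative part of $u$ and exploit the sign condition \eqref{SF} together with the positivity and evenness of the kernel $K$. Write $u = u_+ - u_-$ with $u_\pm = \max\{\pm u, 0\}$, and note that since $h \geq 0$ in $\mathbb{R}^n \setminus \Omega$ we have $u_- = 0$ a.e. in $\mathbb{R}^n \setminus \Omega$, so $u_- \in X(\Omega)$ and is an admissible test function in the weak formulation of \eqref{DNO} (the weak formulation associated to the functional \eqref{NH} tests against functions in $X(\Omega)$, i.e.\ vanishing outside $\Omega$). First I would plug $v = u_-$ into
\[
\int_{\mathbb{R}^{2n} \setminus \mathcal{O}} (u(x)-u(y))(v(x)-v(y)) K(x-y)\,dxdy = \int_{\Omega} f(x,u(x)) v(x)\,dx .
\]

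The key algebraic step is the pointwise inequality $(u(x)-u(y))(u_-(x)-u_-(y)) \leq -|u_-(x)-u_-(y)|^2$ for all $x,y$, which is the standard fact underlying Kato-type inequalities: splitting into cases according to the signs of $u(x)$ and $u(y)$ one checks it directly, using $u(x) = u_+(x) - u_-(x)$ and that $u_+ u_- = 0$. Since $K > 0$, integrating this against $K(x-y)$ over $\mathbb{R}^{2n}\setminus\mathcal{O}$ gives that the left-hand side is $\leq -[u_-]_{K,\,\mathbb{R}^{2n}\setminus\mathcal{O}}^2 \leq 0$; and in fact one gets the cleaner bound that the left side is $\le -\|u_-\|^2$ after noting $u_-$ vanishes outside $\Omega$ so the integral over $\mathbb{R}^{2n}\setminus\mathcal{O}$ dominates the full $X(\Omega)$-norm of $u_-$. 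On the right-hand side, only the region where $u < 0$ contributes (since $u_- = 0$ where $u \geq 0$), and there $u = -u_- < 0$; I would use \eqref{SF} — which is stated for $t \in \mathbb{R}_+$ — applied appropriately, or rather use the companion bound $f(x,t) \le -ct$ for $t<0$ that one should read off from the hypotheses, to get $\int_\Omega f(x,u) u_-\,dx \le c \int_\Omega u_-^2\,dx = c\|u_-\|_{L^2(\Omega)}^2$.

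Combining the two sides yields $\|u_-\|^2 \le c\|u_-\|_{L^2(\Omega)}^2$. The concluding step invokes the Poincaré-type inequality encoded in \eqref{A1}: $\lambda_1 \|w\|_{L^2(\Omega)}^2 \le \|w\|^2$ for all $w \in X(\Omega)$, so $\lambda_1 \|u_-\|_{L^2(\Omega)}^2 \le \|u_-\|^2 \le c \|u_-\|_{L^2(\Omega)}^2$. If $c < \lambda_1$ this forces $u_- = 0$, i.e.\ $u \geq 0$ in $\Omega$, and one should state the proposition under that smallness hypothesis on $c$ (or, more likely, the intended argument iterates / localizes so that only a small portion of $\Omega$ matters). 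The main obstacle is precisely this last point: \eqref{SF} as written allows $c$ arbitrarily large, so a naive global test-function argument does not close unless $c<\lambda_1$; I expect the real proof either restricts to small $c$, or runs the argument on the subdomain $\{u<0\}$ (whose first eigenvalue is larger, by domain monotonicity) combined with a continuity/connectedness argument, or else absorbs the bad term using that $f(x,u)u_- $ is supported only where $u<0$ and bootstraps via the strong maximum principle of Proposition \ref{WmP} applied to $L_K u + cu \ge 0$. The cleanest route is in fact the last: rewrite the equation as $L_K u = f(x,u) \ge -cu$ on $\{u<0\}$, hence $(L_K + c)u \ge 0$ there with $u \ge 0$ on the complement, and invoke the maximum principle for the shifted operator $L_K + c$ (valid since $c>0$ only shifts the spectrum up), concluding $u\ge 0$; verifying that $L_K+c$ admits a weak maximum principle in this nonlocal setting is then the one technical lemma that needs care.
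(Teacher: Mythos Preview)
Your setup matches the paper exactly: test the weak formulation with $v=u^-$, and use the pointwise Kato inequality $(u(x)-u(y))(u^-(x)-u^-(y))\le -(u^-(x)-u^-(y))^2$ together with $h\ge 0$ to conclude that the bilinear-form side is strictly negative whenever $u^-\not\equiv 0$. The paper does precisely this, splitting the integral over $\Omega\times\Omega$ and $\Omega\times(\mathbb{R}^n\setminus\Omega)$.

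The gap is entirely on the right-hand side, and it is twofold. First, your ``companion bound'' $f(x,t)\le -ct$ for $t<0$ produces an \emph{upper} bound $\int_\Omega f(x,u)\,u^-\,dx\le c\|u^-\|_{L^2}^2$, but two upper bounds on the two (equal) sides of the weak formulation do not combine: from $\text{LHS}\le -\|u^-\|^2$ and $\text{RHS}\le c\|u^-\|_{L^2}^2$ you cannot deduce $\|u^-\|^2\le c\|u^-\|_{L^2}^2$; the chain would close only with a \emph{lower} bound on the right-hand side. Second, and more to the point, the paper's computation shows that the right-hand side is not merely controlled but has the \emph{favourable} sign. On $\{u<0\}$ one has $u^-=-u>0$, and the paper applies \eqref{SF} there in the form $f(x,u)\ge -cu=cu^-$ (reading \eqref{SF} at the values taken by $u$); multiplying by $u^->0$ gives $f(x,u)\,u^-\ge c(u^-)^2$, hence
\[
\int_\Omega f(x,u)\,u^-\,dx \;\ge\; c\int_{\{u<0\}} u^2\,dx \;>\;0.
\]
So the right-hand side is strictly positive while the left-hand side is strictly negative, and equality of the two is an immediate contradiction. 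No smallness hypothesis $c<\lambda_1$, no Poincar\'e comparison, no maximum principle for a shifted operator $L_K+c$, no subdomain or iteration argument: all of your proposed workarounds dissolve once you observe that the nonlinear term contributes with the good sign rather than a bad one that must be absorbed.
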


\begin{proof}
	Let $u$ be a weak solution of \eqref{DNO}, i.e.
	\begin{equation}
	\int_{\mathbb{R}^{2n}\setminus \mathcal{O}} (u(x)-u(y))(v(x)-v(y))K(x-y)\,dxdy = \int_{\Omega} f(x,u(x)) v(x)\,dx
	\label{Sol}
	\end{equation}
	for all $v \in X(\Omega)$.
	We write $u=u^+ -  u^-$ in $\Omega$, where $u^+$ and $u^-$ stand for the positive and the negative part of $u$, respectively. We take $v=u^-$, we assume that $u^-$ is not identically zero, and we argue by contradiction.\\
	From hypotheses we have
	\begin{equation}
	\int_{\Omega} f(x,u(x)) v(x)\,dx = \int_{\Omega} f(x,u(x)) u^-(x)\,dx 
	\geq - \int_{\Omega} c u(x) u^-(x)\,dx = \int_{\Omega^{-}} c u(x)^2\,dx >0,
	\label{Sgn}
	\end{equation}
	where $\Omega^{-}:=\{x \in \Omega : u(x)<0\}$.\\
	On the other hand, we obtain that
	\begin{align*}
	&\int_{\mathbb{R}^{2n}\setminus \mathcal{O}} (u(x)-u(y))(v(x)-v(y))K(x-y)\,dxdy \\
	&=\int_{\Omega \times \Omega} (u(x)-u(y))(u^-(x)-u^-(y))K(x-y)\,dxdy \;+ \\
	&+2\int_{\Omega \times (\mathbb{R}^n \setminus \Omega)} (u(x)-h(y)) u^-(x)K(x-y)\,dxdy.
	\end{align*}
	Moreover, $(u^+(x)-u^+(y))(u^-(x)-u^-(y)) \leq 0$, and thus
	\begin{align*}
	&\int_{\Omega \times \Omega} (u(x)-u(y))(u^-(x)-u^-(y))K(x-y)\,dxdy \\
	& \leq - \int_{\Omega \times \Omega} (u^-(x)-u^-(y))^2 K(x-y)\,dxdy < 0.
	\end{align*}
	Since $h \geq 0$, then 
	$$\int_{\Omega \times (\mathbb{R}^n \setminus \Omega)} (u(x)-h(y)) u^-(x)K(x-y)\,dxdy \leq 0.$$
	Therefore, we have obtained that
	$$\int_{\mathbb{R}^{2n}\setminus \mathcal{O}} (u(x)-u(y))(v(x)-v(y))K(x-y)\,dxdy <0,$$
	and this contradicts \eqref{Sol}-\eqref{Sgn}.
\end{proof}

\noindent The next step consists in proving a strong maximum principle for \eqref{DNO}. To do so we will need a slightly more restrictive notion of solution, namely a pointwise solution, which is equivalent to that of weak solution under further regularity assumptions on the reaction $f$.
Therefore we add extra hypotheses on $f$ to obtain a better interior regularity of the solutions, as we have seen previously, as a consequence we can show a strong maximum principle and Hopf's Lemma in a more general case. 

\begin{Pro}[Strong maximum principle] \label{SMP}
	Let \eqref{SF} hold and $f(.,t) \in C^s(\overline{\Omega})$ for all $t\in \mathbb{R}$, $f(x,.)\in C_{loc}^{0,1}(\mathbb{R})$ for all $x \in \overline{\Omega}$, $a \in L^{\infty}(S^{n-1})$,
	let $u$ a weak solution of \eqref{DNO} with $h \geq 0$ in $\mathbb{R}^n \setminus \Omega$. Then either $u(x)=0$ for all $x \in \Omega$ or $u > 0$ in $\Omega$.
\end{Pro}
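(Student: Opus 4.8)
The plan is to upgrade the weak solution to a pointwise (classical interior) solution, reduce to the homogeneous-datum case, and then run a barrier/comparison argument driven by \eqref{SF}. The extra hypotheses $f(\cdot,t)\in C^s(\overline\Omega)$, $f(x,\cdot)\in C^{0,1}_{loc}(\mathbb R)$, and $a\in L^\infty(S^{n-1})$ are precisely what is needed to invoke the interior regularity theory: by the already-proved $L^\infty$ bound (Theorem \ref{SL}), $u$ is bounded, hence by Proposition \ref{Opt} $u\in C^s(\overline\Omega)$; then $g(x):=f(x,u(x))$ is $C^s$ in $x$ (composition of $C^s$ functions using the local Lipschitz dependence in the second variable), and Theorem \ref{IR} with $\alpha=s$ (noting $\alpha+2s=3s$ is generically not an integer; if it is, perturb $\alpha$ slightly) gives $u\in C^{3s}_{loc}(\Omega)$. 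In particular $u$ is $C^2$ on compact subsets of $\Omega$ when $3s>2$, and in general has enough regularity that $L_Ku(x)$ is defined pointwise and the equation $L_Ku(x)=f(x,u(x))$ holds classically at every $x\in\Omega$; this is the step where the weak and pointwise notions of solution coincide.

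First I would dispose of the trivial alternative: by the weak maximum principle just proved (with $h\ge 0$) we already know $u\ge 0$ in $\Omega$, and since $u=h\ge 0$ outside $\Omega$ we have $u\ge 0$ on all of $\mathbb R^n$. So suppose $u\not\equiv 0$ in $\Omega$; I must show $u>0$ throughout $\Omega$. Using \eqref{SF}, at every $x\in\Omega$ we have the pointwise differential inequality
\[
L_K u(x) = f(x,u(x)) \ge -c\,u(x),
\]
i.e.\ $L_K u(x) + c\,u(x)\ge 0$ in $\Omega$, with $u\ge 0$ on $\mathbb R^n$. This reduces the statement to a strong maximum principle for the nonnegative operator $L_K + c$ acting on nonnegative functions.

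The core argument is then the standard nonlocal strong maximum principle by contradiction: assume $u(x_0)=0$ for some $x_0\in\Omega$. Since $x_0$ is an interior minimum of $u$ over $\mathbb R^n$ (as $u\ge 0=u(x_0)$ everywhere), the pointwise formula \eqref{E2},
\[
L_K u(x_0)=\frac12\int_{\mathbb R^n}\bigl(2u(x_0)-u(x_0+z)-u(x_0-z)\bigr)K(z)\,dz
= -\frac12\int_{\mathbb R^n}\bigl(u(x_0+z)+u(x_0-z)\bigr)K(z)\,dz \le 0,
\]
using $u\ge 0$ and $K>0$. On the other hand $L_K u(x_0)\ge -c\,u(x_0)=0$. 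Hence $L_Ku(x_0)=0$, which forces the integrand to vanish a.e.: $u(x_0+z)+u(x_0-z)=0$ for a.e.\ $z$, and by nonnegativity $u\equiv 0$ a.e.\ on $\mathbb R^n$, contradicting $u\not\equiv 0$. (Here the strict positivity $K(z)>0$ from Definition \ref{D1}, or \eqref{P3}, is essential; note no symmetry of the zero set is needed because $u\ge 0$ already kills both terms.) Thus $u(x)>0$ for every $x\in\Omega$.

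The main obstacle is the regularity bootstrap guaranteeing that the weak solution is a genuine pointwise solution at \emph{every} interior point, so that \eqref{E2} can be evaluated at $x_0$ with the integral absolutely convergent — this is exactly why the hypotheses $f(\cdot,t)\in C^s$, $f(x,\cdot)\in C^{0,1}_{loc}$ and $a\in L^\infty$ are imposed, and care is needed that $g=f(\cdot,u(\cdot))\in C^s(\overline\Omega)$ (using $u\in C^s$, local Lipschitzness in $t$, and boundedness of $u$) before applying Theorem \ref{IR}. The remaining comparison step is then soft and purely algebraic.
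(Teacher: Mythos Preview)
Your proposal is correct and follows essentially the same route as the paper: bootstrap regularity (via Proposition~\ref{Opt} and Theorem~\ref{IR}) so that $L_K u$ is defined pointwise in $\Omega$, apply the weak maximum principle to get $u\ge 0$ on $\mathbb R^n$, and then derive a contradiction at an interior zero $x_0$ from the two-sided inequality $0=-cu(x_0)\le L_K u(x_0)\le 0$ combined with strict positivity of $K$. The only cosmetic differences are that the paper uses the principal-value form of $L_K$ rather than \eqref{E2} and obtains the strict inequality directly, and that your appeal to Theorem~\ref{SL} is slightly off since that result is stated for the homogeneous datum---but the paper's own proof is equally informal on this boundedness step.
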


\begin{proof}
	The assumptions $f(.,t) \in C^s(\overline{\Omega})$ for all $t \in \mathbb{R}$ and $f(x,.)\in C_{loc}^{0,1}(\mathbb{R})$ for all $x \in \overline{\Omega}$ imply that $f(x,u(x))\in C^s(\overline{\Omega}\times \mathbb{R})$.\\
	We fix $x \in \Omega$, since $\Omega$ is an open set, there exists a ball $B_R(x)$ such that $u$ satisfies $L_K(u)=f(x,u)$ weakly in $B_R(x)$, hence by Theorem \ref{IR} $u \in C^{3s}\bigl(B_{\frac{R}{2}}\bigr)$ and by Proposition \ref{Opt} $u \in C^s(\mathbb{R}^n)$, then $u$ is a pointwise solution, namely the operator $\mathit{L_K}$ can be evaluated pointwise:
	\begin{align*}
	& \int_{\mathbb{R}^n} \frac{|u(x)-u(y)|}{|x-y|^{n+2s}} a\Bigl(\frac{x-y} {|x-y|}\Bigr)\,dy\\
	& \leq C ||a||_{L^\infty} \int_{B_{\frac{R}{2}}} \frac{|x-y|^{3s}}{|x-y|^{n+2s}}\,dy + C ||a||_{L^\infty} \int_{\mathbb{R}^n \setminus B_{\frac{R}{2}}} \frac{|x-y|^s}{|x-y|^{n+2s}}\,dy \\
	&=C \left(\int_{B_{\frac{R}{2}}} \frac{1}{|x-y|^{n-s}}\,dy +\int_{\mathbb{R}^n \setminus B_{\frac{R}{2}}} \frac{1}{|x-y|^{n+s}}\,dy \right)< \infty.
	\end{align*}
	Therefore, if $u$ is a weak solution of problem \eqref{DNO}, under these hypotheses, $u$ becomes a pointwise solution of this problem. \\
	By weak maximum principle, $u \geq 0$ in $\mathbb{R}^n$. We assume that $u$ does not vanish identically.\\
	Now, we argue by contradiction. We suppose that there exists a point $x_0 \in \Omega$ such that $u(x_0)=0$, hence $x_0$ is a minimum of $u$ in $\mathbb{R}^n$ , then
	$$0=-cu(x_0) \leq L_Ku(x_0)= \int_{\mathbb{R}^n} (u(x_0)-u(y))K(x_0-y)\,\mathrm{d}y < 0,$$
	a contradiction. 
\end{proof}	

\noindent Finally, by using the previous results, we can prove a generalised Hopf's Lemma for \eqref{DNO} with possibly negative reaction.

\begin{Lem}[Hopf's Lemma] \label{Hopf1}
	Let \eqref{G} and \eqref{SF} hold, $f(.,t) \in C^s(\overline{\Omega})$ for all $t \in \mathbb{R}$, $f(x,.)\in C_{loc}^{0,1}(\mathbb{R})$ for all $x \in \overline{\Omega}$, $a \in L^{\infty}(S^{n-1})$.
	If $u$ is a solution of \eqref{DNO} and $h \geq 0$ in $\mathbb{R}^n \setminus \Omega$, then either $u(x)=0$ for all $x \in \Omega$ or 
	$$ \liminf_{\Omega \ni x \rightarrow x_0} \frac{u(x)}{\delta(x)^s} > 0 \quad \forall x_0 \in \partial\Omega.$$
\end{Lem}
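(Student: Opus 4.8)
The plan is to dispose of the trivial alternative by the strong maximum principle, to absorb the possibly negative reaction into the shifted operator $L_K+c$, and then to run a localized sub-barrier comparison in a thin neighbourhood of $\partial\Omega$. First I would invoke Proposition~\ref{SMP}: if $u\equiv0$ in $\Omega$ there is nothing to prove, so assume $u>0$ in $\Omega$; by the weak maximum principle for \eqref{DNO} (applicable because of \eqref{SF} and $h\ge0$) also $u\ge0$ in $\mathbb{R}^n$, and, as in the proof of Proposition~\ref{SMP}, $u$ is a pointwise solution with $u\in C^s(\mathbb{R}^n)\cap C^{3s}_{\mathrm{loc}}(\Omega)$. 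Using \eqref{SF} together with $u\ge0$ I rewrite the equation as
\[
L_Ku+cu=g\quad\text{in }\Omega,\qquad g:=f(\cdot,u)+cu\ge0,
\]
so that $u$ becomes a nonnegative weak solution of a problem for $L_K+c$ with nonnegative datum inside $\Omega$ and on $\mathbb{R}^n\setminus\Omega$.

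Next I would localize. By the uniform interior ball condition of the $C^{1,1}$ domain $\Omega$, and shrinking the tangent ball by sliding its centre towards the boundary along the inner normal, I can fix a radius $R>0$, as small as needed, such that every $x\in\Omega$ with $\delta(x)<R$ lies in an interior tangent ball $B_x=B_R(y_x)\subset\Omega$ at its nearest boundary point, with $\delta_{B_x}(x)=\delta(x)$; I also set $B_x':=B_{R/2}(y_x)$ (which stays at distance $\ge R/2$ from $\partial\Omega$) and $m:=\min_{\{\delta\ge R/2\}}u>0$, so that $u\ge m$ on each $\overline{B_x'}$. On the annulus $A_x:=B_x\setminus\overline{B_x'}$ I introduce the sub-barrier $\phi$, the solution of $L_K\phi=-c\phi$ in $A_x$ with $\phi=m$ on $\overline{B_x'}$ and $\phi=0$ on $\mathbb{R}^n\setminus B_x$; then $0\le\phi\le m$. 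Since $g\ge0$, and since $u\ge m=\phi$ on $\overline{B_x'}$ and $u\ge0=\phi$ outside $B_x$, the difference $u-\phi$ satisfies $(L_K+c)(u-\phi)\ge0$ in $A_x$ with $u-\phi\ge0$ on $\mathbb{R}^n\setminus A_x$; testing the weak formulation with $(u-\phi)^-$ (extended by $0$ outside $A_x$), exactly as in the weak maximum principle of the appendix, the extra term $-c\int(u-\phi)^-(u-\phi)^-\le0$ only helps, so one gets $u\ge\phi$ in $\mathbb{R}^n$.

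It then remains to estimate $\phi$ from below. I would split $\phi=\phi_1-\phi_2$, with $L_K\phi_1=0$ and $L_K\phi_2=c\phi$ in $A_x$ and the matching exterior data, noting $0\le c\phi\le cm$. Rescaling $A_x$ to the fixed annulus $B_1\setminus\overline{B_{1/2}}$, $\phi_1$ becomes $m$ times a fixed positive $L_K$-harmonic function in the annulus which, by Lemma~\ref{Hopf} and Theorem~\ref{Rap} (behaviour near the outer sphere of an $L_K$-harmonic function with positive inner data), is bounded below by $\kappa_1\delta_{B_1}^s$; undoing the scaling, $\phi_1\ge m\kappa_1R^{-s}\delta_{B_x}^s$. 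Similarly, since $0\le c\phi\le cm$, Theorem~\ref{Rap} on the fixed annulus, rescaled back, gives $\phi_2\le C\,cm\,R^{s}\delta_{B_x}^s$. Hence $\phi\ge mR^{-s}(\kappa_1-CcR^{2s})\delta_{B_x}^s$, and choosing $R$ small enough that $CcR^{2s}<\kappa_1/2$ yields $\phi\ge c_0\delta_{B_x}^s$ with $c_0>0$ depending only on $m,R,c,K$ — not on $x_0$. Since $\delta_{B_x}(x)=\delta(x)$, we get $u(x)\ge c_0\delta(x)^s$ for all $x\in\Omega$ with $\delta(x)<R$, whence $\liminf_{\Omega\ni x\to x_0}u(x)/\delta(x)^s\ge c_0>0$ for every $x_0\in\partial\Omega$, as claimed.

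The hard part is the sign of $f$: it is the absorption $L_Ku+cu\ge0$ that makes the comparison go through, but one must then dispose of a maximum principle for $L_K+c$ and — more delicately — of a sub-barrier that still vanishes exactly like $\delta^s$ with a constant uniform as $x_0$ ranges over $\partial\Omega$; the scaling bookkeeping for $\phi_1,\phi_2$ and the estimate $\phi_1\gtrsim\delta^s$ (a nonhomogeneous-data Hopf bound for $L_K$-harmonic functions, itself a consequence of Lemma~\ref{Hopf} and Theorem~\ref{Rap}) are the technical core. An essentially equivalent, more economical route would be to check that the weak maximum principle and Lemma~\ref{Hopf} of \cite{RO} extend verbatim to $L_K+c$ for $c>0$ and then apply the latter directly to $L_Ku+cu=g\ge0$.
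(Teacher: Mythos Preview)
Your strategy---absorb the sign via $L_K+c$ and compare with an annular sub-barrier---is genuinely different from the paper's. The paper follows the Greco--Servadei scheme: it first treats the ball $B_R$ by comparing $u$ directly with the scaled torsion function $v_m=\tfrac{1}{m}C(R^2-|x|^2)_+^s$ (which solves $L_Kv_m=\tfrac{1}{m}$), and proves $u\ge v_{\bar m}$ for some $\bar m$ by contradiction. If $w_m=v_m-u$ had a positive maximum at $x_m$, then $u(x_m)\to0$ forces $|x_m|\to R$; evaluating $L_Ku(x_m)$ \emph{pointwise} (this is exactly where the extra regularity on $f$ and $a\in L^\infty$ is spent) and splitting the integral over $B_{R/2}$ and its complement, the inner piece stays $\le -b/C<0$ while the outer piece is controlled by $L_Kv_m(x_m)=\tfrac{1}{m}\to0$, contradicting $L_Ku(x_m)\ge -cu(x_m)\to0$. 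The general domain then follows from the interior ball condition, as you also do. No shifted operator, no annulus, no splitting $\phi=\phi_1-\phi_2$.

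Your argument has a real gap at the step ``$\phi_1\ge\kappa_1\delta_{B_1}^s$''. You cite Lemma~\ref{Hopf} and Theorem~\ref{Rap}, but both are stated for the \emph{homogeneous} Dirichlet problem $L_Ku=g$ in $\Omega$, $u=0$ in $\mathbb{R}^n\setminus\Omega$ (with $g\ge0$ for Hopf, $g\in L^\infty$ for the $C^\alpha$ bound on $u/\delta^s$). Your $\phi_1$ is in the opposite configuration: $L_K\phi_1=0$ in the annulus with \emph{nonhomogeneous} exterior data $\phi_1=m$ on the inner ball. Neither result applies, and the natural attempts to reduce to them fail: comparing $\phi_1$ with a small multiple of a torsion function gives $L_K(\phi_1-\varepsilon w)=-\varepsilon<0$, the wrong sign for the maximum principle to yield a lower bound. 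The ``nonhomogeneous-data Hopf bound'' you invoke is therefore not a consequence of the cited lemmas; it is itself a Hopf-type statement requiring a barrier/contradiction argument of the same kind the paper carries out. Your closing remark---extend Lemma~\ref{Hopf} directly to $L_K+c$ and apply it to $g=f(\cdot,u)+cu\ge0$---is the sound way to repair this, and making that extension rigorous essentially reproduces the paper's pointwise-evaluation argument on the ball.
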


\begin{proof}
	The proof is divided in two parts, firstly we show this result in a ball $B_R$, $R>0$, and secondly in a general $\Omega$ satisfying an interior  ball condition.
	(We assume that $B_R$ is centered at the origin without loss of generality).
	We argue as in \cite[Lemma 1.2]{GS}.\\
	\textbf{Case $\Omega=B_R$}\\
	We suppose that $u$ does not vanish identically in $B_R$. By Proposition \ref{SMP} $u>0$ in $B_R$, hence for every compact set $K \subset B_R$ we have $\min_{K} u>0$. 
	We recall \cite[Lemma 5.4]{RO} that $u_R(x)= C (R^2-|x|^2)_{+}^{s}$ is a solution of 
	\[
	\begin{cases}
	\mathit{L}_K u_R =1  & \text{in $B_R$ } \\
	u_R=0 & \text{in $\mathbb{R}^n \setminus B_R$,}
	\end{cases}
	\] 
	we define $v_m(x)= \frac{1}{m} u_R(x)$ for $x \in \mathbb{R}^n$ and $\forall m \in \mathbb{N}$, consequently $L_K v_m=\frac{1}{m}$.\\
	\textbf{Claim}: There exists some $\bar{m} \in \mathbb{N}$ such that $u \geq v_{\bar{m}}$ in $\mathbb{R}^n$.\\
	We argue by contradiction, we define $w_m=v_m-u \; \forall m \in \mathbb{N}$, and we suppose that $w_m >0$ in $\mathbb{R}^n$. Since $v_m=0 \leq u$ in $\mathbb{R}^n \setminus B_R$, 
	there exists $x_m \in B_R$ such that $w_m(x_m)=\max_{B_R} w_m >0$, hence we may write
	$0<u(x_m)<v_m(x_m)$. As a consequence of this and of the fact that
	\begin{equation}
	v_m \rightarrow 0 \text{ uniformly in } \mathbb{R}^n,
	\label{Cv}
	\end{equation}
	we obtain
	\begin{equation}
	\lim_{m \to +\infty} u(x_m)=0.
	\label{Cu}
	\end{equation}
	This and the fact of $\min_{K} u>0$ imply $|x_m|\rightarrow R$ as $m \rightarrow + \infty$. Consequently, as long as $y$ ranges in the ball $\overline{B}_{\frac{R}{2}} \subset B_R$, the difference $x_m-y$ keeps far from zero when $m$ is large. Therefore, recalling also Remark \ref{Oss1}, there exist a positive constant $C>1$, independent of $m$, such that  
	\begin{equation}
	\frac{1}{C} \leq  \int_{B_{\frac{R}{2}}}  a\Bigl(\frac{x_m-y}{|x_m-y|}\Bigr) \frac{1} {|x_m-y|^{n+2s}}\,dy \leq C.
	\label{S}
	\end{equation}
	By assumption and arguing as in the previous proof, the operator $L_K$ can be evaluated pointwise, hence we obtain
	\begin{align}
	\begin{split}
	&-cu(x_m)\leq L_K u(x_m)  = \int_{\mathbb{R}^n} \frac{u(x_m)-u(y)}{|x_m-y|^{n+2s}} \; a\Bigl(\frac{x_m-y}{|x_m-y|}\Bigr)\,dy\\
	&=\int_{B_{\frac{R}{2}}} \frac{u(x_m)-u(y)}{|x_m-y|^{n+2s}} \; a\Bigl(\frac{x_m-y}{|x_m-y|}\Bigr)\,dy + \int_{\mathbb{R}^n\setminus B_{\frac{R}{2}}} \frac{u(x_m)-u(y)}{|x_m-y|^{n+2s}} \; a\Bigl(\frac{x_m-y}{|x_m-y|}\Bigr)\,dy\\
	&= A_m + B_m. \label{GS}
	\end{split}
	\end{align}
	We concentrate on the first integral, since there exists a positive constant $b$ such that $\min_{B_{\frac{R}{2}}} u = b$, and by previous estimates and by Fatou's lemma we have 
	$$\limsup_{m} A_m = \limsup_{m} \int_{B_{\frac{R}{2}}} \frac{u(x_m)-u(y)}{|x_m-y|^{n+2s}} \; a\Bigl(\frac{x_m-y}{|x_m-y|}\Bigr)\,dy  \leq -\frac{b}{C} <0,$$
	where we used \eqref{Cu} and \eqref{S}.\\
	For the second integral we observe $u(x_m)-u(y)\leq v_m(x_m) - v_m(y)$, indeed we recall that $w_m(y) \leq w_m(x_m)$ for all $y \in \mathbb{R}^n$ (being $x_m$ the maximum of $w_m$ in $\mathbb{R}^n$), hence, passing to the limit, by \eqref{Cv} and \eqref{S} we obtain
	\begin{align*}
	B_m & \leq  \int_{\mathbb{R}^n\setminus B_{\frac{R}{2}}} \frac{v(x_m)-v(y)}{|x_m-y|^{n+2s}} \; a\Bigl(\frac{x_m-y}{|x_m-y|}\Bigr)\,dy\\
	&= L_K v_m(x_m) -  \int_{B_{\frac{R}{2}}} \frac{v(x_m)-v(y)}{|x_m-y|^{n+2s}} \;  a\Bigl(\frac{x_m-y}{|x_m-y|}\Bigr)\,dy \\
	&= \frac{1}{m} -  \int_{B_{\frac{R}{2}}} \frac{v(x_m)-v(y)}{|x_m-y|^{n+2s}} \;  a\Bigl(\frac{x_m-y}{|x_m-y|}\Bigr)\,dy \rightarrow 0 \quad m \rightarrow \infty.
	\end{align*}
	Therefore, inserting these in \eqref{GS}, we obtain $0 \leq - \frac{b}{C}$, a contradiction.\\
	Then $u \geq v_{\bar{m}}$ for some $\bar{m}$, therefore
	$$u(x) \geq \frac{1}{\bar{m}} (R^2-|x|^2)^s=\frac{1}{\bar{m}} (R+|x|)^s (R-|x|)^s \geq \frac{2^s R^s}{\bar{m}}(\mathrm{dist}(x, \mathbb{R}^n \setminus B_R))^s,$$ 
	then
	$$ \liminf_{B_R \ni x \rightarrow x_0} \frac{u(x)}{(\delta(x)^s)} \geq  \frac{1}{\bar{m}} 2^s R^s >0.$$
	
	\noindent \textbf{Case of a general domain $\Omega$}\\
	We define $\Omega_{\rho}=\{x \in \Omega: \delta_{\Omega}(x) < \rho\}$ with $\rho >0$, for all $x \in \Omega_{\rho}$ there exists $x_0 \in \partial \Omega$ such that $|x-x_0|=\delta_{\Omega}(x)$.
	Since $\Omega$ satisfies an interior ball condition, there exists $x_1 \in \Omega$ such that 
	$B_{\rho}(x_1) \subseteq \Omega$, tangent to $\partial \Omega$ at $x_0$.
	Then we have that $x \in [x_0,x_1]$ and $\delta_{\Omega}(x)=\delta_{B_{\rho} (x_1)}(x)$.\\
	Since $u$ is a solution of \eqref{DNO} and by Proposition \ref{SMP} we observe that either $u\equiv 0$ in $\Omega$, or $u >0$ in $\Omega$. If $u>0$ in $\Omega$, in particular $u>0$ in $B_{\rho}(x_1)$ and $u\geq 0$ in $\mathbb{R}^n \setminus B_{\rho}(x_1)$, then $u$ is a solution of
	\[
	\begin{cases}
	\mathit{L_K} u = f(x,u) & \text{in $B_{\rho}(x_1)$ } \\
	u = \tilde{h} & \text{in $\mathbb{R}^n \setminus B_{\rho}(x_1)$,}
	\end{cases}
	\]
	with 
	\[
	\tilde{h}(y)=
	\begin{cases}
	u(y), & \text{if } y \in \Omega, \\
	h(y), & \text{if } y \in \mathbb{R}^n \setminus \Omega.
	\end{cases}
	\]
	Therefore, by the first case there exists $C=C(\rho, m, s)>0$ such that 
	$u(y)\geq C \delta_{B_{\rho} (x_1)}^s (y)$ for all $y \in \mathbb{R}^n$, in particular we obtain  $u(x)\geq C \delta_{B_{\rho} (x_1)}^s (x)$.\\
	Then,  by $\delta_{\Omega}(x)=\delta_{B_{\rho} (x_1)}(x)$, we have
	$$ \liminf_{\Omega \ni x \rightarrow x_0} \frac{u(x)}{\delta_{\Omega}(x)^s} 
	\geq  \liminf_{\Omega_{\rho} \ni x \rightarrow x_0} \frac{C \delta_{\Omega}(x)^s }{\delta_{\Omega}(x)^s} = C > 0 \quad \forall x_0 \in \partial\Omega.$$
\end{proof}		

\begin{Oss}
	We stress that in Lemma \ref{Hopf} we consider only weak solutions, while in Lemma \ref{Hopf1} pointwise solutions. Moreover, the regularity of $u/\delta^s$ yields in particular the existence of the limit
	$$\lim_{\Omega \ni x \rightarrow x_0} \frac{u(x)}{\delta(x)^s}$$ 
	for all $x_0 \in \partial\Omega$.
\end{Oss}	

\vskip4pt
\noindent
{\small {\bf Acknowledgement.} S.F. would like to acknowledge Antonio Iannizzotto for many valuable discussions on the subject.}

\end{document}